\documentclass[onethmnum]{ws-rv9x6}

\usepackage{ws-rv-thm}          
\usepackage{ws-rv-van}          

\usepackage{eucal}              

\newcommand{\defeq}{:=}

\makeatletter
\newenvironment{sizealign}[1]{%
  \skip@=\baselineskip
  #1%
  \baselineskip=\skip@
  \align
}{\endalign \ignorespacesafterend}  
\makeatother


\begin{document}

\chapter[Algebraic Generating Functions for {G}egenbauer Polynomials]{Algebraic Generating Functions for\\ Gegenbauer Polynomials}

\author[R. S. Maier]{Robert S. Maier$^*$}

\address{Depts.\ of Mathematics and Physics, University of Arizona,\\
Tucson, AZ 85721, USA\\
$^*$E-mail: rsm@math.arizona.edu
}

\begin{abstract}
It is shown that several of Brafman's generating functions for the
Gegenbauer polynomials are algebraic functions of their arguments, if
the Gegenbauer parameter differs from an integer by one-fourth or
one-sixth.  Two examples are given, which come from recently derived
expressions for associated Legendre functions with octahedral or
tetrahedral monodromy.  It is also shown that if the Gegenbauer
parameter is restricted as stated, the Poisson kernel for the
Gegenbauer polynomials can be expressed in~terms of complete elliptic
integrals.  An example is given.
\end{abstract}

\section{Introduction}
\label{sec:intro}

For any $\lambda\in\mathbb{C}$, the Gegenbauer polynomials
$C_n^\lambda(x)$, $n=0,1,2,\dotsc$, where $n$~is the degree, are
defined by
\begin{equation}
\label{eq:defining}
 \sum_{n=0}^\infty C_n^\lambda(x)t^n = 
  (1-2xt+t^2)^{-\lambda} .
\end{equation}
That is, they have $R^{-2\lambda}\defeq(1-2xt+t^2)^{-\lambda}$ as
their ordinary generating function.  (In the sequel, $R$~will signify
$(1-\nobreak2xt+\nobreak t^2)^{1/2}$, with $R=1$ when $t=0$.)  When
$\lambda=1/2$, the $C_n^\lambda(x)$~become the Legendre
polynomials~$P_n(x)$.  They are specializations themselves:
$C_n^\lambda(x)$~is proportional to the ultraspherical polynomial
$P_n^{(\lambda-1/2,\lambda-1/2)}$, which is the
$\alpha=\beta=\allowbreak\lambda-\nobreak1/2$ case of the Jacobi
polynomial~$P_n^{(\alpha,\beta)}(x)$.  If $\lambda\neq-1/2,-3/2,\dotsc$,
one can write
\begin{equation}
\label{eq:Cdef}
C_n^\lambda(x) = \frac{(2\lambda)_n}{n!}
\,{}_2F_1\!\left(  
{{-n,\,n+2\lambda}\atop{\lambda+1/2}}
\biggm|\frac{1-x}2
\right),
\end{equation}
where $(a)_n\defeq a(a+1)\dotsc(a+n-1)$ is the Pochhammer symbol and
${}_2F_1$ is the Gauss hypergeometric function.

We were intrigued by a remark of Mourad Ismail, in Sec.\,4.3 of
Ref.~\citenum{Ismail2005}, to the effect that not many generating functions
for Jacobi polynomials are known, which are algebraic functions of their
arguments (denoted $t,x$ here).  In this chapter, we show that for the
Gegenbauer polynomials with the parameter $\lambda$ differing by one-fourth
or one-sixth from an integer, there are several distinct non-ordinary
generating functions that are algebraic, and can be expressed in closed
form.  (A~similar result in the more familiar case when $\lambda$~is an
integer or a half-odd-integer was previously known.)  The simplest example
is
\begin{equation}
\label{eq:C14example}
\begin{gathered}
\sum_{n=0}^\infty \frac{(-1/12)_n}{(1/2)_n}\,C_n^{1/4}(x)t^n
=
2^{-1/4}R^{1/12}
\left[
\cosh(\xi/3) + \sqrt{\frac{\sinh\xi}{3\sinh(\xi/3)}}\,\,
\right]^{1/4}\!,\\
e^\xi\defeq R^{-1}\left[1-\left(x-\sqrt{x^2-1}\right)t\right],
\end{gathered}
\end{equation}
which holds when $|x|>1$ as an equality between power series in~$t$.
Because $\sinh\xi$, $\sinh(\xi/3)$, and $\cosh(\xi/3)$ are algebraic
functions of~$e^\xi$, the right-hand side is an algebraic function
of~$t,x$ that can be expressed using radicals.  Why it is more easily
written in a trigonometric form will become clear.

The algebraic generating functions derived below are specializations
of two of Brafman's non-ordinary (and in~general, non-algebraic)
generating functions,\cite{Brafman51} which appear in Theorems
\ref{thm:bgf1} and~\ref{thm:bgf1extended}, and their respective
extensions,\cite{Brafman57} which appear in Theorems \ref{thm:bgf2}
and~\ref{thm:bgf2extended}.  For additional light on his two
generating functions, see Chap.~17 of Rainville;\cite{Rainville60}
also Chap.~III, Sec.~4 of Ref.~\citenum{McBride71}; and
Ref.~\citenum{Viswanathan68}.  The extensions come with the aid of an
identity given as Eq.~(4) in Ref.~\citenum{Brafman57}, which is
generalized and placed in~context by Srivastava\cite{Srivastava69}
(cf.~Sec.~4.1 of Srivastava and Manocha\cite{Srivastava84}).\ \ The
generating functions of Brafman are usually expressed in~terms
of~${}_2F_1$, having been derived by series rearrangement, but can
also be written in~terms of the associated Legendre
functions~$P_\nu^\mu$, or their Ferrers counterparts~${\rm
  P}_\nu^\mu$.

There are cases in which the associated Legendre function~$P_\nu^\mu$,
of degree~$\nu$ and order~$\mu$, can be written in closed form; such
as when $\mu=0$ and $\nu=n\in\mathbb{Z}$, in which case $P_\nu^\mu$
and~${\rm P}_\nu^\mu$ equal~$P_n$.  There are others.  It has been
known since the early work of Schwarz\cite{Schwarz1872} on the
algebraicity of~${}_2F_1$ that if the ordered pair
$(\nu,\mu)\in\mathbb{C}^2$ differs from $(\pm1/6,\pm1/4)$,
$(\pm1/4,\pm1/3)$, or $(\pm1/6,\pm1/3)$ by an element
of~$\mathbb{Z}^2$, the functions $P_\nu^\mu$ and~${\rm P}_\nu^\mu$
will be algebraic.  (For an exposition focused on~${}_2F_1$, see
Chap.~VII of Poole;\cite{Poole36} also Sec.~2.7.2 of
Ref.~\citenum{Erdelyi53}, and Ref.~\citenum{Kimura69}.)  But simple,
non-parametric representations of these algebraic functions were not
known.

We recently obtained explicit trigonometric formulas\cite{Maier24} for
the functions $P_{-1/6}^{-1/4},\allowbreak{\rm P}_{-1/6}^{-1/4}$, and
the second-kind Legendre function $Q_{-1/4}^{-1/3}$.  In the Appendix,
the resulting simple formulas for $P_{-1/6}^{\pm1/4},\allowbreak{\rm
  P}_{-1/6}^{\pm1/4}$ and $P_{-1/4}^{\pm1/3},\allowbreak{\rm
  P}_{-1/4}^{\pm1/3}$ are given.  In Secs.~\ref{sec:first}
and~\ref{sec:second}, Brafman's results are specialized with the aid
of these representations, and yield novel algebraic generating
functions for the set $\{C_n^\lambda(x)\}_{n=0}^\infty$ when
$\lambda\in\mathbb{Z}\pm1/4$ and $\lambda\in\mathbb{Z}\pm1/6$.  That
specializing Brafman's two generating functions yields interesting
identities has been pointed~out by Viswanathan,\cite{Viswanathan68}
but the present focus on algebraicity is new.

For the Gegenbauer polynomials $C_n^\lambda(x)$, the cases
$\lambda\in\mathbb{Z}\pm1/4$ and $\lambda\in\mathbb{Z}\pm1/6$ have
long been recognized as special.  For instance, $C_n^{-1/4}(x)$
and~$C_n^{-1/6}(x)$ have been expressed in~terms of elliptic
functions.\cite{Koschmieder20}\ \ Also, the polynomials $C_n^{1/6}(x)$
and $C_n^{7/6}(x)$ have recently been used in applied mathematics, in
the modeling of wave scattering in the exterior of what is locally a
right-angled wedge.\cite{Linton2009}\ \ In Sec.~\ref{sec:poisson}, we
point~out that the Poisson kernel for each of the sets
$\{C_n^{1/4}(x)\}_{n=0}^\infty$ and $\{C_n^{1/6}(x)\}_{n=0}^\infty$ is
also special: it can be expressed in~terms of the first and second
complete elliptic integral functions, $K=K(m)$ and $E=E(m)$.
(A~similar elliptic formula in the case $\lambda=1/2$, i.e., for a
kernel arising from the Legendre set $\{P_n(x)\}_{n=0}^\infty$, was
obtained by Watson.\cite{Watson32c})\ \ Some final remarks appear in
Sec.~\ref{sec:final}.

\section{The First Gegenbauer Generating Function}
\label{sec:first}

The following theorem presents Brafman's first ${}_2F_1$-based generating
function.\cite{Brafman51}
\begin{theorem}
\label{thm:bgf1}
  For any\/ $\lambda\in\mathbb{C}\setminus\{0,-1/2,-1,\dotsc\}$ and\/
  $\gamma\in\mathbb{C}$, one has
\begin{subequations}
\begin{align}
  \sum_{n=0}^\infty \frac{(\gamma)_n}{(2\lambda)_n}\,C_n^\lambda(x)t^n
  &=
  R^{-\gamma}\,{}_2F_1\!\left({{\gamma,\,2\lambda-\gamma}\atop{\lambda+1/2}}\biggm|
\frac{R-1+xt}{2R}
\right)
\label{eq:b1a}
  \\
  &=
(1-xt)^{-\gamma}
\,{}_2F_1\!\left({{\gamma/2,\,\gamma/2+1/2}\atop{\lambda+1/2}}\Biggm|
1-\left(\frac{R}{1-xt}\right)^2
\right),
\label{eq:b1b}
\end{align}
as equalities between power series in\/ $t$.
\end{subequations}
\end{theorem}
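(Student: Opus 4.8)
The plan is to prove the second equality \eqref{eq:b1b} first, by matching the coefficients of $t^n$ on the two sides as formal power series, and then to recover \eqref{eq:b1a} from \eqref{eq:b1b} by a single classical quadratic transformation of the ${}_2F_1$. To read off the coefficient of $t^n$ on the right of \eqref{eq:b1b}, I would expand the hypergeometric factor as a power series in its argument $v=(x^2-1)t^2/(1-xt)^2$, collapse the two numerator Pochhammers by Legendre duplication, $(\gamma/2)_k(\gamma/2+1/2)_k=(\gamma)_{2k}/4^k$, and expand each resulting $(1-xt)^{-\gamma-2k}$ by the binomial series. The factor $(\gamma)_{2k}(\gamma+2k)_{n-2k}=(\gamma)_n$ then telescopes out, so the coefficient of $t^n$ is
\begin{equation*}
(\gamma)_n\sum_{k=0}^{\lfloor n/2\rfloor}\frac{(x^2-1)^k\,x^{n-2k}}{4^k\,(\lambda+1/2)_k\,k!\,(n-2k)!}.
\end{equation*}
Since the coefficient of $t^n$ on the left is $(\gamma)_n\,C_n^\lambda(x)/(2\lambda)_n$, the identity \eqref{eq:b1b} is equivalent to the purely polynomial statement
\begin{equation*}
C_n^\lambda(x)=(2\lambda)_n\sum_{k=0}^{\lfloor n/2\rfloor}\frac{(x^2-1)^k\,x^{n-2k}}{4^k\,(\lambda+1/2)_k\,k!\,(n-2k)!}.
\end{equation*}

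The second step is to prove this last identity, which I would do directly from the defining generating function \eqref{eq:defining} rather than from \eqref{eq:Cdef}. The key is the elementary factorization $1-2xt+t^2=(1-xt)^2-(x^2-1)t^2$, which gives
\begin{equation*}
(1-2xt+t^2)^{-\lambda}=(1-xt)^{-2\lambda}\left[1-\frac{(x^2-1)t^2}{(1-xt)^2}\right]^{-\lambda}.
\end{equation*}
Expanding the bracket by the binomial series and then expanding each $(1-xt)^{-2\lambda-2k}$ binomially, the coefficient of $t^n$ carries a factor $(\lambda)_k(2\lambda+2k)_{n-2k}$, which the duplication formula $(2\lambda)_{2k}=4^k(\lambda)_k(\lambda+1/2)_k$ rewrites as $(2\lambda)_n/[4^k(\lambda+1/2)_k]$. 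This reproduces the claimed sum exactly. Hence \eqref{eq:b1b} is an identity of power series in $t$ whose coefficients are polynomials in $x$ and rational in $\gamma,\lambda$; the excluded values $\lambda\in\{0,-1/2,-1,\dots\}$ enter only to keep $(2\lambda)_n\neq0$ on the left.

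To pass from \eqref{eq:b1b} to \eqref{eq:b1a}, I would invoke the quadratic transformation
\begin{equation*}
{}_2F_1\!\left({{a,\,b}\atop{(a+b+1)/2}}\biggm|u\right)=(1-2u)^{-a}\,{}_2F_1\!\left({{a/2,\,(a+1)/2}\atop{(a+b+1)/2}}\biggm|\frac{-4u(1-u)}{(1-2u)^2}\right),
\end{equation*}
taken with $a=\gamma$, $b=2\lambda-\gamma$ (so that $(a+b+1)/2=\lambda+1/2$) and $u=(R-1+xt)/(2R)$. Using only $R^2=1-2xt+t^2$ one checks the elementary relations $1-2u=(1-xt)/R$ and $4u(1-u)=(1-x^2)t^2/R^2$, whence $R^{-\gamma}(1-2u)^{-\gamma}=(1-xt)^{-\gamma}$ and $-4u(1-u)/(1-2u)^2=(x^2-1)t^2/(1-xt)^2=v$. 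This turns the right side of \eqref{eq:b1a} into the right side of \eqref{eq:b1b}, closing the chain of equalities.

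The main obstacle is the argument bookkeeping in this third step: one must confirm that after the quadratic transformation the hypergeometric argument collapses, via $(1-xt)^2-R^2=(x^2-1)t^2$, to exactly the rational function $v$, and that the branch of $R$ fixed by $R=1$ at $t=0$ makes the prefactor powers combine to $(1-xt)^{-\gamma}$. A secondary point, easily dispatched, is that although $u$ and $R$ individually carry a square root, the right side of \eqref{eq:b1a} is a genuine power series in $t$: since $u=\tfrac12\bigl(1-(1-xt)(1-2xt+t^2)^{-1/2}\bigr)=O(t^2)$ and $R^{-\gamma}$ is itself a power series in $t$, the odd powers of $\sqrt{x^2-1}$ cancel, and the two sides may be compared legitimately coefficient by coefficient.
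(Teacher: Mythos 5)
Your proposal is correct and takes essentially the same route as the paper: the paper presents (\ref{eq:b1b}) as Brafman's identity, derived by series rearrangement — exactly the coefficient-matching you carry out via the factorization $1-2xt+t^2=(1-xt)^2-(x^2-1)t^2$ and Legendre duplication — and then passes to (\ref{eq:b1a}) by the same quadratic transformation of ${}_2F_1$ that you invoke, with identical argument bookkeeping ($1-2u=(1-xt)/R$, $-4u(1-u)/(1-2u)^2=(x^2-1)t^2/(1-xt)^2$). The only difference is that you make the series-rearrangement proof of (\ref{eq:b1b}) self-contained, whereas the paper simply cites Brafman for it.
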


\begin{note}
  Version~(\ref{eq:b1b}) is Brafman's, rewritten; (\ref{eq:b1a})~follows by
  a quadratic transformation of~${}_2F_1$.  When $\gamma=-N$,
  $N=0,1,2,\dotsc$, the series terminate, and the ${}_2F_1$
  in~(\ref{eq:b1a}) is proportional to~$C_N^\lambda\left((1-xt)/R\right)$,
  by~(\ref{eq:Cdef}).
\end{note}

The ${}_2F_1$ in~(\ref{eq:b1a}) can be rewritten as an associated
Legendre (or Ferrers) function, with the aid of the
formula~(\ref{eq:Pdef}); as is true of the ${}_2F_1$ in~(\ref{eq:b1b})
if $\gamma=2\lambda-\nobreak1/2$.  It must be remembered that
$P_\nu^\mu(z),{\rm P}_\nu^\mu(z)$ are defined if, respectively,
$z\notin(-\infty,1]$ and $z\notin(-\infty,-1]\cup[1,\infty)$.  The
      rewriting yields
\begin{theorem}
\label{thm:rewritea}
For any\/ $\mu\notin\{1/2,1,3/2,\dots\},$ one has
\begin{subequations}
\begin{align}
\label{eq:rewritea1}
&\sum_{n=0}^\infty \frac{(-\nu-\mu)_n}{(1-2\mu)_n}\,C_n^{1/2-\mu}(x)t^n\\
&\qquad=
2^{-\mu}\,\Gamma(1-\mu)\,R^{\nu+\mu}
\left[
(z^2-1)^{\mu/2}\,P_\nu^\mu(z)
\right]\Bigm|_{z=\tfrac{1-xt}R}
,\nonumber
\intertext{for any $\nu\in\mathbb{C}$.  Moreover,}
&\label{eq:rewritea2}
\sum_{n=0}^\infty \frac{(1/2-2\mu)_n}{(1-2\mu)_n}\,C_n^{1/2-\mu}(x)t^n\\
&\qquad=
2^{-\mu}\,\Gamma(1-\mu)\,(1-xt)^{2\mu-1/2}
\left[
(z^2-1)^{\mu/2}\,P_{-1/4}^\mu(z)
\right]\Bigm|_{z=
2\left(\tfrac{R}{1-xt}\right)^2-1
}.\nonumber
\end{align}
\end{subequations}
These hold as equalities between power series in\/ $t$.  For
real\/~$z$, they hold as stated when $z>1$, and hold when\/
$z\in(-1,1)$ if the Legendre function\/ $P_\nu^\mu$ and\/
$z^2-\nobreak1$ are replaced by the Ferrers function\/
${\mathrm{P}}_\nu^\mu$ and\/ $1-\nobreak z^2$.
\end{theorem}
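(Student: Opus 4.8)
The plan is to derive Theorem~\ref{thm:rewritea} directly from Theorem~\ref{thm:bgf1} by substituting $\lambda=1/2-\mu$ (so that $\lambda+1/2=1-\mu$ and $2\lambda=1-2\mu$) and then converting the two Gauss hypergeometric functions on the right-hand sides of~(\ref{eq:b1a}) and~(\ref{eq:b1b}) into associated Legendre functions via the definitional formula~(\ref{eq:Pdef}) referenced in the text. The restriction $\mu\notin\{1/2,1,3/2,\dots\}$ is exactly the condition $\lambda\in\mathbb{C}\setminus\{0,-1/2,-1,\dots\}$ needed to invoke Theorem~\ref{thm:bgf1}, so admissibility transfers for free.

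For~(\ref{eq:rewritea1}), I would match parameters in~(\ref{eq:b1a}): with $\gamma=-\nu-\mu$ and $2\lambda=1-2\mu$, the upper parameters become $\gamma=-\nu-\mu$ and $2\lambda-\gamma=\nu+1-\mu=\nu-\mu+1$, while the lower parameter is $\lambda+1/2=1-\mu$. The goal is to recognize this ${}_2F_1$, evaluated at the argument $\tfrac{R-1+xt}{2R}$, as a multiple of $P_\nu^\mu(z)$ with $z=(1-xt)/R$. The key computation is that under $z=(1-xt)/R$ one has $\tfrac{1-z}{2}=\tfrac{R-1+xt}{2R}$, so the hypergeometric argument is precisely $(1-z)/2$; this is the standard argument appearing in the ${}_2F_1$-representation of $P_\nu^\mu(z)$. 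Comparing the parameters $\{-\nu-\mu,\,\nu-\mu+1;\,1-\mu\}$ against the standard form~(\ref{eq:Pdef}) should identify the series as $P_\nu^\mu(z)$ up to the prefactor $2^{-\mu}\Gamma(1-\mu)(z^2-1)^{\mu/2}$, and the residual powers of $R$ on both sides must be reconciled: the $R^{-\gamma}=R^{\nu+\mu}$ from~(\ref{eq:b1a}) together with the $(z^2-1)^{\mu/2}=(R^2-(1-xt)^2)^{\mu/2}R^{-\mu}$ factor should combine to give exactly the stated $R^{\nu+\mu}$ after the $(z^2-1)^{\mu/2}$ is absorbed into the bracket.

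For~(\ref{eq:rewritea2}), I would instead specialize~(\ref{eq:b1b}) with the same $\lambda=1/2-\mu$ and with $\gamma=2\lambda-1/2=1/2-2\mu$ (the value flagged in the text as the one making~(\ref{eq:b1b}) expressible via Legendre functions). The upper parameters $\gamma/2=1/4-\mu$ and $\gamma/2+1/2=3/4-\mu$, together with lower parameter $1-\mu$, should match the quadratic-argument form of $P_{-1/4}^\mu$; the argument $1-(R/(1-xt))^2$ in~(\ref{eq:b1b}) must be reconciled with $z=2(R/(1-xt))^2-1$ via the relation $(1-z)/2=(R/(1-xt))^2$, so that $1-(R/(1-xt))^2=(1+z)/2$, again matching a standard Legendre argument. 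The degree $-1/4$ comes from solving $-\nu-\mu=\gamma/2$, i.e.\ $\nu=-1/4$, which is consistent with the subscript in the statement.

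The main obstacle will be bookkeeping of the branch-dependent prefactors---the fractional powers $(z^2-1)^{\mu/2}$, $R^{\nu+\mu}$, and $(1-xt)^{2\mu-1/2}$---and ensuring they combine correctly as formal power series in $t$ with the normalization $R=1$ at $t=0$; in particular the $\Gamma(1-\mu)$ and $2^{-\mu}$ constants arise from the precise normalization in~(\ref{eq:Pdef}) and must be tracked exactly. The final clause about the Ferrers function is then essentially automatic: for $z\in(-1,1)$ the associated Legendre $P_\nu^\mu$ is replaced by ${\mathrm{P}}_\nu^\mu$ and $z^2-1$ by $1-z^2$ according to their respective domains of definition stated in the text, since both functions share the same ${}_2F_1$-representation differing only by which branch of $(z^2-1)^{\mu/2}$ versus $(1-z^2)^{\mu/2}$ is taken.
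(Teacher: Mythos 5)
Your route is the paper's own: the paper derives Theorem~\ref{thm:rewritea} exactly as you propose, by setting $\lambda=1/2-\mu$ in Theorem~\ref{thm:bgf1} and converting the ${}_2F_1$'s in (\ref{eq:b1a}) and (\ref{eq:b1b}) into associated Legendre functions via (\ref{eq:Pdef}), with $\gamma=-\nu-\mu$ for the first identity and $\gamma=2\lambda-1/2$ for the second. Your treatment of (\ref{eq:rewritea1}) is correct: the parameters $\{-\nu-\mu,\,1+\nu-\mu;\,1-\mu\}$ and the identification $(R-1+xt)/(2R)=(1-z)/2$ at $z=(1-xt)/R$ are exactly what is needed, and the prefactor bookkeeping is trivial since $(z^2-1)^{\mu/2}$ simply stays inside the bracket.

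But in your verification of (\ref{eq:rewritea2}) you have the key algebraic relation backwards, and as written that step fails. With the theorem's substitution $z=2\left[R/(1-xt)\right]^2-1$, one has $(1+z)/2=\left[R/(1-xt)\right]^2$, hence $(1-z)/2=1-\left[R/(1-xt)\right]^2$; that is, the argument of the ${}_2F_1$ in (\ref{eq:b1b}) is already of the form $(1-z)/2$ demanded by (\ref{eq:Pdef}), and the identification with $P_{-1/4}^\mu(z)$ is immediate. You instead assert $(1-z)/2=\left[R/(1-xt)\right]^2$, which would force the ${}_2F_1$ argument to be $(1+z)/2$ --- and a ${}_2F_1$ evaluated at $(1+z)/2$ is \emph{not} matched by (\ref{eq:Pdef}); it would correspond to a Legendre-type function of $-z$, which on the cut plane is not related to $P_{-1/4}^\mu(z)$ by any simple identity. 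So the match you claim does not go through as stated. The error is self-correcting once the one line of algebra is redone, and the remainder of your argument (the prefactor $(1-xt)^{2\mu-1/2}$, the constants $2^{-\mu}\Gamma(1-\mu)$, the degree $\nu=-1/4$ obtained from $-\nu-\mu=\gamma/2$, and the Ferrers clause for $z\in(-1,1)$) then stands. A smaller instance of the same carelessness appears in your first part, where you write $(z^2-1)^{\mu/2}=(R^2-(1-xt)^2)^{\mu/2}R^{-\mu}$: in fact $z^2-1=\left[(1-xt)^2-R^2\right]/R^2=(x^2-1)t^2/R^2$, so the sign inside is reversed (your expression is the Ferrers quantity $1-z^2$); this one is harmless only because the factor is never expanded in the final statement.
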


\begin{note}
  By examination, if $x$ is real and $t$ is real and of sufficiently small
  magnitude, then $z>1$ (implying that the Legendre case is applicable)
  in~(\ref{eq:rewritea1}) when $|x|>1$ and in~(\ref{eq:rewritea2}) when
  $|x|<1$.  Conversely, $z\in(-1,1)$ (implying that the Ferrers case is
  applicable) in~(\ref{eq:rewritea1}) when $|x|<1$ and
  in~(\ref{eq:rewritea2}) when $|x|>1$.  For the  $|x|<1$ case
  of~(\ref{eq:rewritea1}), cf.\ Thm.~3 of Ref.~\citenum{Cohl2013b}.
\end{note}

Specializing parameters $\nu,\mu$ in this theorem yields a number of
interesting identities.  As is summarized in the Appendix, the
associated Legendre function~$P_\nu^\mu$ and Ferrers function ${\rm
  P}_\nu^\mu$ can be written in~terms of elementary functions in
several cases.  They are referred~to here as the reducible case (i),
the quasi-\allowbreak{algebraic} cases (ii[a]), (ii[b]), and the
algebraic cases (iii), (iv[a]),~(iv[b]).

In the reducible case~(i), $\nu=-\mu+N$, $N=0,1,2,\dotsc$.  The
functions $P_\nu^\mu=P_{-\mu+N}^\mu$ and ${\rm P}_\nu^\mu={\rm
  P}_{-\mu+N}^\mu$ can then be expressed in~terms of the
polynomial~$C_N^{\mu-1/2}$ (see~(\ref{eq:Gcase})).  This leads to
\begin{theorem}
\label{thm:cfMiller}
  For any\/ $\lambda\in\mathbb{C}\setminus\{0,-1/2,-1,\dotsc\}$ and\/
  $N=0,1,2,\dots,$  one has
  \begin{align}
    \label{eq:G1}
    \sum_{n=0}^N \frac{(-N)_n}{(2\lambda)_n}\,C_n^\lambda(x)t^n 
    &= \frac{N!}{(2\lambda)_N} R^N\, C_N^\lambda\!\left(\frac{1-xt}{R}\right),\\
    \label{eq:G2}
    \sum_{n=0}^\infty \frac{(2\lambda+N)_n}{(2\lambda)_n}\,C_n^\lambda(x)t^n 
    &= \frac{N!}{(2\lambda)_N} R^{-2\lambda-N}\, C_N^\lambda\!\left(\frac{1-xt}{R}\right),
  \end{align}
as equalities between power series in\/ $t$.
\end{theorem}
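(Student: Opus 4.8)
The plan is to derive both identities as direct specializations of the master formula (\ref{eq:b1a}) in Theorem~\ref{thm:bgf1}, using the defining hypergeometric representation (\ref{eq:Cdef}) to collapse the resulting terminating ${}_2F_1$ into a single Gegenbauer polynomial. The crucial preliminary observation is purely about the argument: the quantity $(R-1+xt)/(2R)$ that appears in (\ref{eq:b1a}) is exactly the value of $(1-X)/2$ at $X=(1-xt)/R$, since $1-2\cdot\frac{R-1+xt}{2R}=\frac{1-xt}{R}$. This is the substitution that converts the ${}_2F_1$ of (\ref{eq:b1a}) into $C_N^\lambda\!\left((1-xt)/R\right)$ whenever the ${}_2F_1$ is of the terminating, degree-$N$ Gegenbauer type in (\ref{eq:Cdef}).

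For (\ref{eq:G1}) I would set $\gamma=-N$ in (\ref{eq:b1a}). On the left the factor $(\gamma)_n=(-N)_n$ vanishes for $n>N$, so the sum terminates at $n=N$, matching the finite sum claimed. On the right the prefactor becomes $R^{-\gamma}=R^{N}$, and the hypergeometric function acquires upper parameters $-N$ and $2\lambda+N$ and lower parameter $\lambda+1/2$ -- precisely the ${}_2F_1$ of (\ref{eq:Cdef}) for degree $N$ and parameter $\lambda$, evaluated at the argument identified above. Inserting the normalizing factor $(2\lambda)_N/N!$ from (\ref{eq:Cdef}) then yields exactly $\frac{N!}{(2\lambda)_N}R^N C_N^\lambda\!\left((1-xt)/R\right)$.

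For (\ref{eq:G2}) I would instead set $\gamma=2\lambda+N$. Now the second upper parameter $2\lambda-\gamma$ reduces to $-N$, so after invoking the symmetry ${}_2F_1(a,b;c;\cdot)={}_2F_1(b,a;c;\cdot)$ in the upper parameters, the hypergeometric factor is literally the same terminating one as in the previous case and again collapses to $C_N^\lambda\!\left((1-xt)/R\right)$ via (\ref{eq:Cdef}); the prefactor $R^{-\gamma}=R^{-2\lambda-N}$ supplies the stated remaining factor.

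The only point deserving genuine care -- the place where (\ref{eq:G2}) is not a formal triviality -- is its status as a power-series identity. There the index $n$ runs to infinity on the left, so the left-hand side is a bona fide non-terminating power series in $t$, yet it sums to a closed form built from a single polynomial. The resolution is that the termination occurs in the hypergeometric summation variable, i.e.\ in the expansion about $(1-xt)/R$, and not in the $t$-expansion itself. Since Theorem~\ref{thm:bgf1} already asserts (\ref{eq:b1a}) as an equality of power series in $t$ for every admissible $\gamma$ -- and $\gamma=2\lambda+N$ is admissible under the hypotheses -- no separate convergence argument is needed. I would close by remarking that the exclusion $\lambda\notin\{0,-1/2,-1,\dots\}$ is exactly what keeps $(2\lambda)_n$, $(2\lambda)_N$, and the denominator parameter $\lambda+1/2$ nonsingular, so that every quantity appearing is well defined.
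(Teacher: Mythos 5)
Your proof is correct, and it is mathematically the same argument as the paper's, just carried out one level upstream. The paper proves both identities by substituting the reducible-case formula (\ref{eq:Gcase}) into the Legendre-function rewriting (\ref{eq:rewritea1}), invoking $P_\nu^\mu=P_{-\nu-1}^\mu$ for (\ref{eq:G2}); you instead specialize $\gamma$ directly in (\ref{eq:b1a}) and collapse the terminating ${}_2F_1$ via (\ref{eq:Cdef}). These two routes are translations of each other under the dictionary (\ref{eq:Pdef}): equation (\ref{eq:rewritea1}) is (\ref{eq:b1a}) with $\gamma=-\nu-\mu$ and $\lambda=1/2-\mu$; formula (\ref{eq:Gcase}) is (\ref{eq:Cdef}) in Legendre dress; and the degree reflection $P_\nu^\mu=P_{-\nu-1}^\mu$ used by the paper for (\ref{eq:G2}) is exactly the upper-parameter symmetry $\gamma\leftrightarrow 2\lambda-\gamma$ of the ${}_2F_1$ that you invoke, since under $\gamma=-\nu-\mu$ the map $\nu\mapsto-\nu-1$ sends $\gamma$ to $2\lambda-\gamma$. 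What your route buys is self-containedness: you never need Theorem~\ref{thm:rewritea} or the Appendix, and indeed the $\gamma=-N$ half of your argument is essentially recorded in the paper as the Note following Theorem~\ref{thm:bgf1}. What the paper's route buys is uniformity of presentation: it exhibits Theorem~\ref{thm:cfMiller} as the reducible case~(i) of the same specialization scheme whose octahedral case~(iii) and tetrahedral cases (iv[a]), (iv[b]) produce the algebraic generating functions that are the chapter's main point. Your closing remarks---that termination happens in the hypergeometric summation variable rather than in~$t$, and that the hypothesis $\lambda\notin\{0,-1/2,-1,\dotsc\}$ is what keeps $(2\lambda)_n$, $(2\lambda)_N$, and the lower parameter $\lambda+1/2$ nonsingular---are accurate.
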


\begin{proof}
  To obtain~(\ref{eq:G1}), substitute (\ref{eq:Gcase})
  into~(\ref{eq:rewritea1}); and for~(\ref{eq:G2}) do the same, first using
  $P_\nu^\mu=P_{-\nu-1}^\mu$ (or ${\rm P}_\nu^\mu={\rm P}_{-\nu-1}^\mu$).
\end{proof}

The finite sum identity~(\ref{eq:G1}) has been derived
Lie-theoretically by Miller (see Eq.~(4.11) of
Ref.~\citenum{Miller68c} and p.~204 of Ref.~\citenum{Miller68}), and
(\ref{eq:G2})~is also known.  The $N=0$ case of~(\ref{eq:G2}) is
of~course the defining generating function~(\ref{eq:defining}) for the
Gegenbauer polynomials.  There are also identities resembling
(\ref{eq:G1}),(\ref{eq:G2}) that come from~(\ref{eq:rewritea2}).

In the quasi-cyclic case~(ii[a]), the degree~$\nu$ is an integer, and
$P_\nu^\mu$ is therefore expressible in closed form
(see~(\ref{eq:allisaccomplished})).  For instance,
$P_{-1}^\mu(\coth\xi)=P_0^\mu(\coth\xi)$ equals
$\Gamma(1-\mu)^{-1}e^{\mu\xi}$, or equivalently
\begin{equation}
  P_{-1}^\mu(z) = P_0^\mu(z) = \Gamma(1-\mu)^{-1}\,
\left[(z+1)/(z-1)\right]^{\mu/2}.
\end{equation}
Setting $\nu=-1,0$ in~(\ref{eq:rewritea1}) accordingly yields the pair
\begin{subequations}
\begin{align}
\label{eq:alt1}
\sum_{n=0}^\infty \frac{(\lambda+1/2)_n}{(2\lambda)_n}\,C_n^\lambda(x)t^n
&=R^{-1}\left(\frac{1+R-xt}2\right)^{1/2-\lambda},\\
\label{eq:alt2}
\sum_{n=0}^\infty \frac{(\lambda-1/2)_n}{(2\lambda)_n}\,C_n^\lambda(x)t^n
&=\left(\frac{1+R-xt}2\right)^{1/2-\lambda}.
\end{align}
\end{subequations}
The identity~(\ref{eq:alt1}) is a well-known alternative generating
function for the Gegenbauer polynomials.  But its companion~(\ref{eq:alt2})
is less well known, though a generalization to Jacobi polynomials was found
by Carlitz;\cite{Carlitz61} as is the fact that a closed form can be
computed whenever the coefficient $(\lambda+\nobreak 1/2)_n/\allowbreak
(2\lambda)_n$ is replaced by $(\lambda+\nobreak k+\nobreak
1/2)_n/\allowbreak(2\lambda)_n$, with~$k\in\mathbb{Z}$.

In the quasi-dihedral case (ii[b]), the order~$\mu$ is a
half-odd-integer, which in (\ref{eq:rewritea1}),(\ref{eq:rewritea2})
means that the Gegenbauer parameter $\lambda=1/2-\nobreak\mu$ must be
taken to be an integer.  This is the fairly straightforward
trigonometric (e.g., Chebyshev) case, and the resulting identities are
not given here.

The focus here is on the octahedral case~(iii), when
$(\nu,\mu)\in\allowbreak\mathbb{Z}^2+\allowbreak(\pm1/6,\pm1/4)$, and
the tetrahedral subcases (iv[a]) and~(iv[b]), when
$(\nu,\mu)\in\allowbreak\mathbb{Z}^2+\allowbreak(\pm1/4,\pm1/3)$
and~$(\pm1/6,\pm1/3)$.  The functions $P_{\nu}^\mu(z)$, ${\rm
  P}_{\nu}^\mu(z)$ are then algebraic in~$z$ (see the Appendix).

\begin{theorem}
  \label{thm:algebraics}
  The Gegenbauer generating function
  \begin{displaymath}
    \sum_{n=0}^\infty \frac{(\gamma)_n}{(2\lambda)_n}\,C_n^\lambda(x)t^n
  \end{displaymath}
is algebraic\/ {\rm(1)} if\/ $\lambda\in\mathbb{Z}\pm1/4$ with\/
$\gamma-\lambda\in\mathbb{Z}\pm1/3${\rm;} or {\rm(2)}~if\/
$\lambda\in\mathbb{Z}\pm1/6$ with\/
$\gamma-\lambda\in\mathbb{Z}\pm\{1/3,1/4\}${\rm.}
\end{theorem}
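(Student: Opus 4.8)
The plan is to combine the rewriting in Theorem~\ref{thm:rewritea} with the Schwarz list of algebraic Legendre and Ferrers functions recalled in Section~\ref{sec:intro}. First I would match the coefficient $(\gamma)_n/(2\lambda)_n$ of the displayed series against the coefficient $(-\nu-\mu)_n/(1-2\mu)_n$ appearing in~(\ref{eq:rewritea1}). This is possible precisely under the substitution
\begin{equation*}
  \mu=\tfrac12-\lambda,\qquad \nu=-\gamma-\mu=\lambda-\gamma-\tfrac12,
\end{equation*}
and in every case covered by the theorem one checks that $\lambda$ and $\mu$ have fractional part $1/4,3/4,1/3,2/3$ or $1/6,5/6$, never $0$ or $1/2$, so that $\lambda\notin\{0,-1/2,-1,\dots\}$ and $\mu\notin\{1/2,1,3/2,\dots\}$ and Theorems~\ref{thm:bgf1} and~\ref{thm:rewritea} genuinely apply. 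Under this dictionary the right-hand side of~(\ref{eq:rewritea1}) becomes $2^{-\mu}\Gamma(1-\mu)\,R^{\nu+\mu}(z^2-1)^{\mu/2}P_\nu^\mu(z)$ evaluated at $z=(1-xt)/R$.

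Second, I would observe that every factor other than $P_\nu^\mu(z)$ is algebraic in $(t,x)$. In each stated case $\lambda$ and $\gamma$ are rational, hence so are $\mu$ and $\nu$; and since $R=(1-2xt+t^2)^{1/2}$ and $z=(1-xt)/R$ are algebraic, the quantities $R^{\nu+\mu}$, $(z^2-1)^{\mu/2}$, and $z$ are all algebraic functions of $(t,x)$. Thus the right-hand side is algebraic as soon as $P_\nu^\mu(z)$---or, in the Ferrers regime $z\in(-1,1)$, the function $\mathrm{P}_\nu^\mu(z)$, which is algebraic in exactly the same Schwarz cases---is algebraic in $z$. Because the identity of Theorem~\ref{thm:rewritea} holds on an open $(t,x)$-set where the right-hand side equals this algebraic function, the generating function is itself algebraic there.

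It remains to verify that the two hypotheses place $(\nu,\mu)$ on the Schwarz list $\mathbb{Z}^2+\{(\pm1/6,\pm1/4),(\pm1/4,\pm1/3),(\pm1/6,\pm1/3)\}$; this modular bookkeeping is the one genuinely delicate step, since the $-1/2$ in $\nu=\lambda-\gamma-1/2$ must be tracked carefully. From $\mu=\tfrac12-\lambda$ one has $\lambda\in\mathbb{Z}\pm1/4\Leftrightarrow\mu\in\mathbb{Z}\pm1/4$ and $\lambda\in\mathbb{Z}\pm1/6\Leftrightarrow\mu\in\mathbb{Z}\pm1/3$. For the degree, writing $\gamma-\lambda=k+\varepsilon$ with $k\in\mathbb{Z}$ gives $\nu\equiv\tfrac12-\varepsilon\pmod1$, and the congruences $\tfrac12\pm\tfrac13\equiv\mp\tfrac16$ and $\tfrac12\pm\tfrac14\equiv\mp\tfrac14\pmod1$ then show that $\varepsilon\in\{\pm1/3\}$ forces $\nu\in\mathbb{Z}\pm1/6$, while $\varepsilon\in\{\pm1/4\}$ forces $\nu\in\mathbb{Z}\pm1/4$.

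Assembling the cases finishes the argument. In case~(1), $\mu\in\mathbb{Z}\pm1/4$ together with $\nu\in\mathbb{Z}\pm1/6$ is the octahedral class $(\pm1/6,\pm1/4)$. In case~(2) one has $\mu\in\mathbb{Z}\pm1/3$, and the subcase $\gamma-\lambda\in\mathbb{Z}\pm1/3$ yields $\nu\in\mathbb{Z}\pm1/6$ (the tetrahedral class $(\pm1/6,\pm1/3)$), whereas $\gamma-\lambda\in\mathbb{Z}\pm1/4$ yields $\nu\in\mathbb{Z}\pm1/4$ (the tetrahedral class $(\pm1/4,\pm1/3)$). In each the relevant $P_\nu^\mu$ and $\mathrm{P}_\nu^\mu$ are algebraic, so the generating function is algebraic as claimed. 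If standard fractional representatives are preferred, the symmetry $P_\nu^\mu=P_{-\nu-1}^\mu$ (used already for~(\ref{eq:G2})) can normalise $\nu$, but it is not needed once the congruences above are recorded; the only real work is this arithmetic matching, the algebraicity itself being inherited directly from Theorem~\ref{thm:rewritea} and the Schwarz list.
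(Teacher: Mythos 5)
Your proposal is correct and follows the paper's own route: the paper proves Theorem~\ref{thm:algebraics} precisely by restricting~(\ref{eq:rewritea1}) to the octahedral case~(iii) and the tetrahedral cases (iv[a]),(iv[b]), which is exactly your reduction via the dictionary $\mu=\tfrac12-\lambda$, $\nu=\lambda-\gamma-\tfrac12$ and the Schwarz list. The only difference is that you spell out the modular bookkeeping that the paper leaves implicit, and your congruence checks are accurate.
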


\begin{proof}
  Claim~1 comes by restricting (\ref{eq:rewritea1}) to the octahedral
  case~(iii), and claim~2 by restricting (\ref{eq:rewritea1}) to the
  tetrahedral cases (iv[a]),(iv[b]).
\end{proof}

For the octahedral case (iii) and the tetrahedral case~(iv[a]), the
fundamental algebraic formulas are (\ref{subeqs:basicocto})
and~(\ref{subeqs:basictetra}), where $P_{-1/6}^{\pm1/4}(\cosh\xi)$, ${\rm
  P}_{-1/6}^{\pm1/4}(\cos\theta)$ and $P_{-1/4}^{\pm1/3}(\coth\xi)$, ${\rm
  P}_{-1/4}^{\pm1/3}(\tanh\xi)$ are given in~terms of trigonometric
functions: hyperbolic ones of~$\xi$ and circular ones of~$\theta$.
In~effect, they are given in~terms of $e^\xi$ or~$e^{{\rm i}\theta}$.  But
in Theorem~\ref{thm:rewritea}, the Legendre/Ferrers argument~$z$ equals
$(1-\nobreak xt)/R$ (in~(\ref{eq:rewritea1})) or $2\left[R/(1-\nobreak
  xt)\right]^2-1$ (in~(\ref{eq:rewritea2})).  The following table adapts
(\ref{subeqs:basicocto}),(\ref{subeqs:basictetra}) to the needs of
Theorem~\ref{thm:rewritea}.

\bigskip
\begin{tabular}{lcl}
\hline
\hline
\multicolumn{2}{l}{$z=(1-xt)/R$} & \\
\hline
$z = \cosh\xi$ & $\implies$ & $e^\xi = R^{-1}\bigl[1-(x-\sqrt{x^2-1})t\bigr]$\\
$z = \cos\theta$ & $\implies$ & $e^{{\rm i}\theta} = R^{-1}\bigl[1-(x-{\rm i}\,\sqrt{1-x^2})t\bigr]$\\
$z = \coth\xi$ & $\implies$ & $e^\xi=t\sqrt{x^2-1}/(1-R-xt)$\\
$z = \tanh\xi$ & $\implies$ & $e^\xi=t\sqrt{1-x^2}/(-1+R+xt)$\\
\hline
\hline
\multicolumn{2}{l}{$z=2\left[R/(1-xt)\right]^2-1$} & \\
\hline
$z = \cosh\xi$ & $\implies$ & $e^{\xi/2} = (1-xt)/\!\left[R-t\sqrt{1-x^2}\right]$\\
$z = \cos\theta$ & $\implies$ & $e^{{\rm i}\theta/2} = (1-xt)/\!\left[R- {\rm i}\,t\sqrt{x^2-1}\right]$\\
$z = \coth\xi$ & $\implies$ & $e^\xi = R/\left(t\sqrt{1-x^2}\right)$\\
$z = \tanh\xi$ & $\implies$ & $e^\xi = R/\left(t\sqrt{x^2-1}\right)$\\
\hline
\hline
\end{tabular}
\bigskip

By combining the $(\nu,\mu)=(-1/6,1/4)$ case of~(\ref{eq:rewritea1})
with~(\ref{eq:basicocto1}), aided by the first line of this table, one
readily derives an explicit, octahedrally algebraic generating
function for the set of polynomials $\{C_n^{1/4}(x)\}_{n=0}^\infty$.
It appeared in the Introduction as Eq.~(\ref{eq:C14example}).  One
also derives a tetrahedrally algebraic generating function for
$\{C_n^{1/6}(x)\}_{n=0}^\infty$, namely
\begin{align}
&\sum_{n=0}^\infty \frac{(-1/12)_n}{(1/3)_n}\,C_n^{1/6}(x)t^n\label{eq:C16example}\\
\nonumber
&=2^{-7/12}\,3^{-3/8}R^{1/12}
(\sinh\xi)^{-1/3}
\left[ \sqrt{\sqrt{3}+1}\:f_+ + \sqrt{\sqrt{3}-1}\:f_- \right],\\
&\qquad e^\xi \defeq t\sqrt{x^2-1}/(1-R-xt)\:;
\nonumber\\
&=2^{-7/12}\,3^{-3/8}R^{1/12}
(\cosh\xi)^{-1/3}
\left[ \sqrt{\sqrt{3}+1}\:g_+ + \sqrt{\sqrt{3}-1}\:g_- \right],\nonumber\\
&\qquad e^\xi \defeq t\sqrt{1-x^2}/(-1+R+xt).
\nonumber
\end{align}
This comes by combining the $(\nu,\mu)=(-1/4,1/3)$ case
of~(\ref{eq:rewritea1}) with (\ref{eq:basictetra1})
and~(\ref{eq:basictetra2}), aided by the third and fourth lines of the
table.  Here, the functions $f_\pm=f_\pm(\coth\xi)$ and
$g_\pm=g_\pm(\tanh(\xi))$ are algebraic in~$e^\xi$ and are defined in
(\ref{eq:lastminutea}),(\ref{eq:lastminuteb}).  The two right-hand sides
of~(\ref{eq:C16example}) are equivalent, but when the argument~$x$ is real,
they are most useful when, respectively, $|x|>1$ and~$|x|<1$.  Additional
explicitly algebraic generating functions that arise from
(\ref{eq:rewritea1}) or~(\ref{eq:rewritea2}) can be worked~out.

\smallskip
The preceding results stemmed from Theorem~\ref{thm:bgf1} but can be
generalized, because Brafman's first generating function can be
hypergeometrically extended.  The extension uses the identities
appearing in his Ref.~\citenum{Brafman57} as Eqs.\ (4) and~(11).  The
latter identity, which is a specialization of the former to Gegenbauer
polynomials, can be restated as~follows.

\begin{lemma}
\label{lem:key}
  For any\/ $\lambda\in\mathbb{C}$, and parameters\/ $c_1,\dots,c_p$,
  $d_1,\dots,d_q$ and\/ $u\in\mathbb{C}$ for which the below\/
  ${}_{p+1}F_q$ coefficients are defined, one has
  \begin{multline*}
    \sum_{n=0}^\infty {}_{p+1}F_q\!
    \left(
    {{-n,\,c_1,\dots,\,c_p}\atop{d_1,\dots, d_q}}\biggm | u
    \right)C_n^\lambda(x)t^n\\
    =
    R^{-2\lambda} \sum_{n=0}^\infty 
    \frac{(c_1)_n\dotsb(c_p)_n}{(d_1)_n\dotsb(d_q)_n}\,C_n^\lambda\!\left(\frac{x-t}R\right) \left(\frac{-tu}R\right)^n,
  \end{multline*}
  as an equality between power series in\/ $t$.
\end{lemma}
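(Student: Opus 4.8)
The plan is to prove the identity by rearranging the left-hand side into a single power series in~$t$ whose coefficients are recognizable as derivatives of the ordinary generating function~$R^{-2\lambda}$, and then to re-sum. First I would expand the terminating hypergeometric coefficient as ${}_{p+1}F_q(-n,c_1,\dots,c_p;d_1,\dots,d_q\mid u)=\sum_{k=0}^{n}\frac{(-n)_k(c_1)_k\cdots(c_p)_k}{(d_1)_k\cdots(d_q)_k}\frac{u^k}{k!}$, substitute it into the left side, and interchange the $n$- and $k$-summations. Because the statement is an identity of formal power series in~$t$ and each power $t^N$ receives contributions only from the finitely many pairs with $k\le n=N$, the interchange requires no analytic justification. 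After it, the inner sum over~$n$ is $\sum_{n\ge k}(-n)_k\,C_n^\lambda(x)t^n$; using $(-n)_k=(-1)^k\,n!/(n-k)!$ together with the defining generating function~(\ref{eq:defining}), this inner sum equals $(-1)^k\,t^k\frac{d^k}{dt^k}R^{-2\lambda}$, since term-by-term differentiation of $\sum_n C_n^\lambda(x)t^n=R^{-2\lambda}$ brings down exactly the falling factorial $n!/(n-k)!$.

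The crux is then the differentiation formula
\[
\frac{1}{k!}\,\frac{d^k}{dt^k}\,R^{-2\lambda}
= R^{-2\lambda-k}\,C_k^\lambda\!\left(\frac{x-t}{R}\right),
\]
which converts each differentiated term back into a Gegenbauer polynomial of the shifted argument $(x-t)/R$. I would establish it by a generating-function argument in an auxiliary variable~$s$: on one hand, Taylor's theorem gives $\sum_{k=0}^\infty\frac{s^k}{k!}\frac{d^k}{dt^k}R^{-2\lambda}=\bigl(1-2x(t+s)+(t+s)^2\bigr)^{-\lambda}$; on the other, applying~(\ref{eq:defining}) with argument $(x-t)/R$ and variable $s/R$, and simplifying $R^2-2s(x-t)+s^2=1-2x(t+s)+(t+s)^2$, shows that $\sum_{k=0}^\infty s^k\,R^{-2\lambda-k}\,C_k^\lambda\!\left((x-t)/R\right)$ has the same closed form. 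Comparing coefficients of $s^k$ yields the displayed identity.

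Substituting this back, the left side becomes $\sum_{k=0}^\infty\frac{(c_1)_k\cdots(c_p)_k}{(d_1)_k\cdots(d_q)_k}(-u)^k\,R^{-2\lambda}(t/R)^k\,C_k^\lambda\!\left((x-t)/R\right)$, which is precisely $R^{-2\lambda}\sum_k\frac{(c_1)_k\cdots(c_p)_k}{(d_1)_k\cdots(d_q)_k}\,C_k^\lambda\!\left((x-t)/R\right)\left(-tu/R\right)^k$, the right-hand side. The only genuine obstacle is the differentiation formula; once it is in hand the remainder is bookkeeping. As a guard against sign or normalization slips I would check it at $k=1$, where $\frac{d}{dt}R^{-2\lambda}=2\lambda\,R^{-2\lambda-2}(x-t)$ agrees with $R^{-2\lambda-1}C_1^\lambda\!\left((x-t)/R\right)=R^{-2\lambda-1}\cdot 2\lambda\,(x-t)/R$.
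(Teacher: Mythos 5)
Your proof is correct, but it takes a genuinely different route from the paper's. The paper does not re-derive the lemma at all: it is Brafman's identity, and the note following it records his method, namely series rearrangement after each $C_n^\lambda$ has been written hypergeometrically --- not via (\ref{eq:Cdef}) but in a form incorporating a quadratic transformation. You avoid any hypergeometric representation of the Gegenbauer polynomials. After the interchange of the $n$- and $k$-sums (which you justify correctly: it is purely formal, since each power $t^N$ receives only finitely many contributions), everything rests on the Taylor-shift identity $\frac{1}{k!}\frac{d^k}{dt^k}R^{-2\lambda}=R^{-2\lambda-k}\,C_k^\lambda\!\left((x-t)/R\right)$, which you prove by expanding $\bigl(1-2x(t+s)+(t+s)^2\bigr)^{-\lambda}$ in powers of $s$ in two ways; both your algebra and your $k=1$ check are sound. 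It is worth noticing that this identity, with $s=-ut$, is exactly the $p=q=0$ case of the lemma (both sides then equal $U^{-2\lambda}$ in the paper's later notation, since ${}_1F_0(-n;\mbox{---};u)=(1-u)^n$), so in effect you reduce the general lemma to its simplest instance by linearity --- a structural insight the paper's citation-based treatment does not make visible. As for what each approach buys: yours is self-contained, needs only the defining relation (\ref{eq:defining}) plus formal power-series algebra, and explains conceptually why the shifted argument $(x-t)/R$ and the variable $-tu/R$ appear (they are the Taylor-shift data of the ordinary generating function); Brafman's rearrangement, on which the paper leans, does not require a closed-form generating function to differentiate, which is why it generalizes (in Srivastava's hands, as the paper notes) to settings where the coefficient sequence is a more general hypergeometric one and no analogue of $R^{-2\lambda}$ is available.
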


\begin{note}
  In Ref.~\citenum{Brafman57}, the argument of the $C_n^\lambda$ on the
  right is written as~$w$, which is defined in Eq.~(1) of that work to
  equal $2(x-t)/R$.  The~`2' is easily seen to be erroneous, and has been
  removed.  This identity is proved by series rearrangement, once each
  $C_n^\lambda$ has been expressed hypergeometrically: not as
  in~(\ref{eq:Cdef}), but in a form that incorporates a quadratic
  transformation.
\end{note}

By applying the $p=q=1$ case of this lemma to the statement of
Theorem~\ref{thm:bgf1}, one readily obtains the following corollaries of
(\ref{eq:b1a}) and~(\ref{eq:b1b}).  Here and below, $U$~signifies
$[1-2(1-u)xt + (1-u)^2t^2]^{1/2}$, with $U=1$ when~$t=0$.  Hence,
$U$~interpolates between $R$ at~$u=0$ and unity at~$u=1$.  Similarly,
$R^2+\nobreak u(x-\nobreak t)t$ interpolates between $R^2$ and $1-\nobreak
xt$.  It should be noted that to make (\ref{eq:b1bextended})
resemble~(\ref{eq:b1b}) as closely as possible, Euler's transformation
of~${}_2F_1$ has been applied to its right-hand side.
\begin{theorem}
\label{thm:bgf1extended}
  For any\/ $\lambda\in\mathbb{C}\setminus\{0,-1/2,-1,\dotsc\}$ and
  $\gamma\in\mathbb{C},$ and arbitrary\/~$u,$ one has
\begin{subequations}
\begin{align}
&  \sum_{n=0}^\infty {}_2F_1\!\left(
{{-n,\,2\lambda-\gamma}\atop{2\lambda}}
\biggm| u
\right)
C_n^\lambda(x)t^n\nonumber\\
&  =
U^{\gamma-2\lambda}
  R^{-\gamma}\,{}_2F_1\!\left({{\gamma,\,2\lambda-\gamma}\atop{\lambda+1/2}}\biggm|
\frac{UR-[R^2+u(x-t)t]}{2\,UR}
\right)
\label{eq:b1aextended}
  \\
&  =
U^{2\gamma-2\lambda} [R^2+u(x-t)t]^{-\gamma}
\,{}_2F_1\!\left({{\gamma/2,\,\gamma/2+1/2}\atop{\lambda+1/2}}\Biggm|
1-\left[\frac{UR}{R^2+u(x-t)t}\right]^2
\right),
\label{eq:b1bextended}
\end{align}
where the equalities are between power series in\/ $t$.
\end{subequations}
\end{theorem}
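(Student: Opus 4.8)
The plan is to apply the $p=q=1$ case of Lemma~\ref{lem:key} to the left-hand sides of~(\ref{eq:b1aextended}) and~(\ref{eq:b1bextended}), reducing both extended identities to Theorem~\ref{thm:bgf1} with shifted arguments. First I would match the coefficient on the left, namely ${}_2F_1\bigl({{-n,\,2\lambda-\gamma}\atop{2\lambda}}\bigm|u\bigr)$, against the $p=q=1$ form of the lemma; this forces $c_1=2\lambda-\gamma$ and $d_1=2\lambda$. The lemma then rewrites the common left-hand side as
\begin{equation*}
R^{-2\lambda}\sum_{n=0}^\infty \frac{(2\lambda-\gamma)_n}{(2\lambda)_n}\,C_n^\lambda\!\left(\frac{x-t}R\right)\left(\frac{-tu}R\right)^n .
\end{equation*}
The key observation is that this sum is \emph{itself} an instance of the left-hand side of Theorem~\ref{thm:bgf1}, under the three simultaneous substitutions $\gamma\mapsto 2\lambda-\gamma$, $x\mapsto(x-t)/R$, and $t\mapsto -tu/R$.

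To exploit this, I would compute the ``new'' radical $R'=(1-2x't'+t'^2)^{1/2}$ that Theorem~\ref{thm:bgf1} attaches to the shifted variables $x'=(x-t)/R$, $t'=-tu/R$. A short calculation gives $R'^2=[R^2+2u(x-t)t+u^2t^2]/R^2$, and since $R^2=1-2xt+t^2$ the numerator collapses to $1-2(1-u)xt+(1-u)^2t^2=U^2$. Hence $R'=U/R$, which is precisely the algebraic fact that makes the interpolating quantity $U$ emerge. This identity is the heart of the argument; once it is in hand, the rest is bookkeeping.

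With $R'=U/R$ established, the first form~(\ref{eq:b1aextended}) follows by inserting the shifted variables into~(\ref{eq:b1a}). The numerator parameters of the resulting ${}_2F_1$ become $2\lambda-\gamma$ and $2\lambda-(2\lambda-\gamma)=\gamma$, which by the symmetry of ${}_2F_1$ in its upper parameters reproduce those in~(\ref{eq:b1aextended}); the prefactor $R^{-2\lambda}R'^{-(2\lambda-\gamma)}$ simplifies to $U^{\gamma-2\lambda}R^{-\gamma}$, and the argument $(R'-1+x't')/(2R')$ simplifies to $\{UR-[R^2+u(x-t)t]\}/(2UR)$, matching the claim.

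For the second form~(\ref{eq:b1bextended}) I would instead substitute into~(\ref{eq:b1b}). The argument $1-(R'/(1-x't'))^2$ reduces directly to $1-[UR/(R^2+u(x-t)t)]^2$, but the numerator parameters come out as $\lambda-\gamma/2$ and $\lambda-\gamma/2+1/2$ rather than the advertised $\gamma/2$ and $\gamma/2+1/2$. The final step is Euler's transformation ${}_2F_1(a,b;c;z)=(1-z)^{c-a-b}\,{}_2F_1(c-a,c-b;c;z)$, which converts the parameters as desired and contributes a factor $(1-z)^{\gamma-\lambda}=[UR/(R^2+u(x-t)t)]^{2(\gamma-\lambda)}$; this combines with the $R^{-2\lambda}(1-x't')^{-(2\lambda-\gamma)}$ prefactor. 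The main obstacle is purely organizational: tracking the three simultaneous substitutions and verifying that, upon collecting exponents, the $R$-dependence cancels completely to leave $U^{2\gamma-2\lambda}[R^2+u(x-t)t]^{-\gamma}$.
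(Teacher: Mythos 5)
Your proposal is correct and takes essentially the same route as the paper, which likewise obtains Theorem~\ref{thm:bgf1extended} by applying the $p=q=1$ case of Lemma~\ref{lem:key} (with $c_1=2\lambda-\gamma$, $d_1=2\lambda$) to Theorem~\ref{thm:bgf1}, and then applies Euler's transformation to put the right-hand side of~(\ref{eq:b1bextended}) in the stated form. Your identity $R'=U/R$ and the exponent bookkeeping correctly supply the details that the paper leaves to the reader.
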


\begin{note}
  When $\gamma=-N$, $N=0,1,2,\dotsc$, the right-hand series terminate, and
  the ${}_2F_1$ in~(\ref{eq:b1aextended}) is proportional
  to~$C_N^\lambda\left([R^2+u(x-t)t]/UR\right)$.
\end{note}

Theorem~\ref{thm:bgf1extended} is not merely a corollary of
Theorem~\ref{thm:bgf1}, but an extension.  It reduces to
Theorem~\ref{thm:bgf1} when $u=1$, because the left-hand ${}_2F_1$
then equals $(\gamma)_n/(2\lambda)_n$, by the Chu--Vandermonde
formula.  Rewriting each right-hand ${}_2F_1$ in
Theorem~\ref{thm:bgf1extended} as an associated Legendre function
yields the following, which is an extension of
Theorem~\ref{thm:rewritea}.  (In~(\ref{eq:b1bextended}), the rewriting
is possible only if $\gamma=\allowbreak2\lambda-\nobreak1/2$.)

\begin{theorem}
\label{thm:rewriteaextended}
For any\/ $\mu\notin\{1/2,1,3/2,\dots\},$ and arbitrary\/~$u,$ one has
\begin{subequations}
\begin{align}
\label{eq:rewritea1extended}
&\sum_{n=0}^\infty {}_2F_1\!\left(
{{-n,\,\nu-\mu+1}\atop{1-2\mu}}
\Bigm| u
\right)
C_n^{1/2-\mu}(x)t^n\\
&=
2^{-\mu}\,\Gamma(1-\mu)\,
U^{-\nu+\mu-1}
R^{\nu+\mu}
\left[
(z^2-1)^{\mu/2}\,P_\nu^\mu(z)
\right]\Bigm|_{z=\tfrac{R^2+u(x-t)t}{UR}},
\nonumber\\
\intertext{for any $\nu\in\mathbb{C}$.  Moreover,}
\label{eq:rewritea2extended}
&\sum_{n=0}^\infty {}_2F_1\!\left(
{{-n,\,1/2}\atop{1-2\mu}}
\Bigm| u
\right)
C_n^{1/2-\mu}(x)t^n\\
&=
2^{-\mu}\,\Gamma(1-\mu)\,U^{-2\mu}\nonumber\\
&\quad{}\times\left[R^2+u(x-t)t\right]^{2\mu-1/2}
\left[
(z^2-1)^{\mu/2}\,P_{-1/4}^\mu(z)
\right]\Bigm|_{z=
2\left[\tfrac{UR}{R^2+u(x-t)t}\right]^2-1
}.
\nonumber
\end{align}
\end{subequations}
These hold as equalities between power series in\/ $t$.  For real\/~$z$,
they hold as stated when $z>1$, and hold when\/ $z\in(-1,1)$ if\/
$P_\nu^\mu$ and\/ $z^2-\nobreak1$ are replaced by\/ ${\mathrm{P}}_\nu^\mu$
and\/ $1-\nobreak z^2$.
\end{theorem}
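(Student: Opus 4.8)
The plan is to obtain this theorem from Theorem~\ref{thm:bgf1extended} in exactly the way Theorem~\ref{thm:rewritea} was obtained from Theorem~\ref{thm:bgf1}: by rewriting each right-hand ${}_2F_1$ as an associated Legendre function via the representation~(\ref{eq:Pdef}). The analytic content (the hypergeometric extension driven by Lemma~\ref{lem:key}) already resides in Theorem~\ref{thm:bgf1extended}, so what remains is a specialization of parameters followed by a mechanical identification of a ${}_2F_1$ with $P_\nu^\mu$.

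For~(\ref{eq:rewritea1extended}) I would specialize (\ref{eq:b1aextended}) by setting $\lambda=1/2-\mu$ and $\gamma=-\nu-\mu$, so that $2\lambda=1-2\mu$, $2\lambda-\gamma=\nu-\mu+1$, and $\lambda+1/2=1-\mu$. The left-hand series then becomes the one displayed in~(\ref{eq:rewritea1extended}), while the prefactor $U^{\gamma-2\lambda}R^{-\gamma}$ turns into $U^{-\nu+\mu-1}R^{\nu+\mu}$, matching the claimed powers. It remains to convert the surviving factor ${}_2F_1\!\left({{-\nu-\mu,\,\nu-\mu+1}\atop{1-\mu}}\bigm|\frac{UR-[R^2+u(x-t)t]}{2UR}\right)$ into a Legendre function. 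Putting $z=[R^2+u(x-t)t]/(UR)$, one checks that the argument is exactly $(1-z)/2$ and that the upper parameters $-\nu-\mu,\,\nu-\mu+1$ are precisely those appearing in~(\ref{eq:Pdef}); invoking that formula replaces the ${}_2F_1$ by $2^{-\mu}\Gamma(1-\mu)(z^2-1)^{\mu/2}P_\nu^\mu(z)$, which assembles into the stated right-hand side.

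For~(\ref{eq:rewritea2extended}) the same scheme applies to~(\ref{eq:b1bextended}), except that the rewriting forces $\gamma=2\lambda-1/2$, i.e.\ $\gamma=1/2-2\mu$. Then $2\lambda-\gamma=1/2$, so the left-hand series collapses to $\sum_n{}_2F_1\!\left({{-n,\,1/2}\atop{1-2\mu}}\bigm|u\right)C_n^{1/2-\mu}(x)t^n$, while the upper ${}_2F_1$ parameters $\gamma/2=1/4-\mu$ and $\gamma/2+1/2=3/4-\mu$ are exactly the $\nu=-1/4$ instance of $-\nu-\mu,\,\nu-\mu+1$ in~(\ref{eq:Pdef}). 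Taking $z=2\left[UR/(R^2+u(x-t)t)\right]^2-1$, one verifies that the ${}_2F_1$ argument equals $(1-z)/2$ and that $U^{2\gamma-2\lambda}[R^2+u(x-t)t]^{-\gamma}$ becomes $U^{-2\mu}[R^2+u(x-t)t]^{2\mu-1/2}$; applying~(\ref{eq:Pdef}) with $\nu=-1/4$ then produces $P_{-1/4}^\mu(z)$ and the claimed identity. For real $z\in(-1,1)$ I would instead use the Ferrers analog of~(\ref{eq:Pdef}), in which $(z^2-1)^{\mu/2}$ is replaced by $(1-z^2)^{\mu/2}$; this yields precisely the stated substitution $P_\nu^\mu\to{\rm P}_\nu^\mu$, $z^2-1\to 1-z^2$.

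The one point I expect to be slightly delicate rather than purely mechanical is confirming that the branch choices and the Legendre/Ferrers domain split behave exactly as in Theorem~\ref{thm:rewritea} once the $u$-dependent argument $z=[R^2+u(x-t)t]/(UR)$ (or its squared counterpart) is substituted. Since $U=R=1$ at $t=0$ and both vary analytically, the power-series identities hold formally in any case; the real-$z$ trichotomy ($z>1$ giving the Legendre regime, $z\in(-1,1)$ the Ferrers regime) can then be read off by the same continuity argument used in the note following Theorem~\ref{thm:rewritea}, now with $U$ tending to $R$ as $u\to1$ so that the $u=1$ reduction recovers Theorem~\ref{thm:rewritea} via the Chu--Vandermonde evaluation of the left-hand ${}_2F_1$ at $u=1$.
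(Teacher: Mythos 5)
Your proposal is correct and matches the paper's own derivation: the paper obtains Theorem~\ref{thm:rewriteaextended} exactly by specializing $\lambda=1/2-\mu$, $\gamma=-\nu-\mu$ in~(\ref{eq:b1aextended}) and $\gamma=2\lambda-1/2$ in~(\ref{eq:b1bextended}), then identifying the surviving ${}_2F_1$'s with $P_\nu^\mu$ and $P_{-1/4}^\mu$ via~(\ref{eq:Pdef}), with the Legendre/Ferrers split handled by the same convention as in Theorem~\ref{thm:rewritea}. Your parameter bookkeeping (prefactors $U^{-\nu+\mu-1}R^{\nu+\mu}$ and $U^{-2\mu}[R^2+u(x-t)t]^{2\mu-1/2}$, and the argument identifications $(1-z)/2$) checks out in full.
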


By specializing Theorem~\ref{thm:rewriteaextended}, one can immediately
extend the preceding results on closed-form generating functions and
algebraicity to include the free parameter~$u$.  The following
specializations of~(\ref{eq:rewritea1extended}), based on~(\ref{eq:Gcase}),
are extensions of the identities (\ref{eq:G1}) and~(\ref{eq:G2}) of
Theorem~\ref{thm:cfMiller}; for the latter, cf.\ Eq.~(4.14) of
Viswanathan~\cite{Viswanathan68} and Eq.~(5.121) of Miller.~\cite{Miller68}

\begin{theorem}
\label{thm:cfMillerextended}
  For any\/ $\lambda\in\mathbb{C}\setminus\{0,-1/2,-1,\dotsc\}$ and\/
  $N=0,1,2,\dots,$ and arbitrary\/ $u,$  one has
  \begin{align*}
    &\sum_{n=0}^\infty {}_2F_1\!\left( {{-n,\,2\lambda+N}\atop{2\lambda}} \biggm| u \right) C_n^\lambda(x)t^n\\
    &\qquad\qquad\qquad\qquad\qquad= \frac{N!}{(2\lambda)_N} U^{-2\lambda-N}   R^N\, C_N^\lambda\!\left(\frac{R^2+u(x-t)t}{UR}\right),\\
    &\sum_{n=0}^\infty {}_2F_1\!\left( {{-n,\,-N}\atop{2\lambda}} \biggm| u \right) C_n^\lambda(x)t^n \\
    &\qquad\qquad\qquad\qquad\qquad= \frac{N!}{(2\lambda)_N} U^{N} R^{-2\lambda-N}\, C_N^\lambda\!\left(\frac{R^2+u(x-t)t}{UR}\right),
  \end{align*}
as equalities between power series in\/ $t$.
\end{theorem}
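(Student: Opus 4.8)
The plan is to specialize Theorem~\ref{thm:rewriteaextended}, specifically its first identity~(\ref{eq:rewritea1extended}), in exactly the manner by which Theorem~\ref{thm:cfMiller} was extracted from~(\ref{eq:rewritea1}): namely, by placing $(\nu,\mu)$ in the reducible case~(i), where $\nu+\mu=N\in\{0,1,2,\dots\}$ and the bracketed associated Legendre expression collapses to a multiple of a Gegenbauer polynomial via~(\ref{eq:Gcase}). Throughout I would set $\mu=1/2-\lambda$, so that the summand $C_n^{1/2-\mu}$ becomes $C_n^\lambda$ and the lower ${}_2F_1$ parameter $1-2\mu$ becomes $2\lambda$, matching the denominators $(2\lambda)_n$ appearing on both target left-hand sides (and the hypothesis $\lambda\notin\{0,-1/2,-1,\dots\}$ is precisely $\mu\notin\{1/2,1,3/2,\dots\}$). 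The only remaining freedom is the choice of the degree $\nu$, and each of the two claimed identities corresponds to one natural choice.

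For the first identity I would match the upper ${}_2F_1$ parameter $\nu-\mu+1$ against $2\lambda+N$, which forces $\nu=-\mu+N$; this is exactly the reducible case $\nu+\mu=N$. Substituting $(\nu,\mu)=(-\mu+N,\,1/2-\lambda)$ into~(\ref{eq:rewritea1extended}) and invoking~(\ref{eq:Gcase}) converts the bracket into a multiple of $C_N^\lambda(z)$ with $z=[R^2+u(x-t)t]/(UR)$. The constant $2^{-\mu}\Gamma(1-\mu)$ times the constant furnished by~(\ref{eq:Gcase}) collapses to $N!/(2\lambda)_N$ in exactly the same way as in Theorem~\ref{thm:cfMiller} (the prefactor $2^{-\mu}\Gamma(1-\mu)$ is common to~(\ref{eq:rewritea1}) and~(\ref{eq:rewritea1extended})), so that calculation need not be repeated. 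It then remains only to read off the exponents, which evaluate to $U^{-\nu+\mu-1}=U^{-2\lambda-N}$ and $R^{\nu+\mu}=R^{N}$, reproducing the first claimed right-hand side.

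For the second identity the upper parameter must instead be $-N$, so I match $\nu-\mu+1=-N$, giving $\nu=-\lambda-N-1/2$. This value does not itself lie in the reducible case, so—exactly as in the derivation of~(\ref{eq:G2})—I would first apply the reflection $P_\nu^\mu=P_{-\nu-1}^\mu$ (equivalently ${\rm P}_\nu^\mu={\rm P}_{-\nu-1}^\mu$), which replaces the degree by $-\nu-1=\lambda+N-1/2=-\mu+N$, landing back in the reducible case with the \emph{same} integer $N$ (hence the same constant from~(\ref{eq:Gcase})). Applying~(\ref{eq:Gcase}) then again yields $C_N^\lambda(z)$, while the exponents, computed with the unreflected $\nu=-\lambda-N-1/2$, evaluate to $U^{-\nu+\mu-1}=U^{N}$ and $R^{\nu+\mu}=R^{-2\lambda-N}$, matching the second claim.

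The derivation is thus routine specialization, and the only point demanding genuine care is this last exponent bookkeeping in the second identity: the reflection formula is invoked \emph{solely} to evaluate the Legendre function, whereas the powers of $U$ and $R$ must still be computed from the original $\nu$ before reflection. Since both identities are asserted merely as equalities of formal power series in~$t$, the Legendre/Ferrers dichotomy of Theorem~\ref{thm:rewriteaextended} is immaterial here—in the reducible case both functions reduce to the same polynomial $C_N^\lambda$—so the two identities hold uniformly, and the $u=1$ case recovers (\ref{eq:G2}) and the defining generating function as consistency checks.
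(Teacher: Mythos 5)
Your proposal is correct and follows exactly the paper's route: the paper obtains this theorem by specializing (\ref{eq:rewritea1extended}) via (\ref{eq:Gcase}) with $\mu=1/2-\lambda$, taking $\nu=-\mu+N$ for the first identity and, for the second, taking the degree so that the upper parameter is $-N$ and then applying $P_\nu^\mu=P_{-\nu-1}^\mu$ before invoking (\ref{eq:Gcase}) --- precisely mirroring the proof of Theorem~\ref{thm:cfMiller}. Your exponent bookkeeping ($U^{-2\lambda-N}R^N$ and $U^NR^{-2\lambda-N}$, computed from the unreflected $\nu$) and the observation that the Legendre/Ferrers dichotomy is immaterial in the reducible case are both accurate.
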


A $u$-dependent extension of Theorem~\ref{thm:algebraics}, which was
implied by~(\ref{eq:rewritea1}), can also be obtained; it is an
immediate consequence of the $u$\nobreakdash-dependent
extension~(\ref{eq:rewritea1extended}) of~(\ref{eq:rewritea1}), and is
\begin{theorem}
  \label{thm:algebraicsextended}
  The Gegenbauer generating function
  \begin{displaymath}
    \sum_{n=0}^\infty {}_2F_1\!\left(
    {{-n,\,2\lambda-\gamma}\atop{2\lambda}}
    \Bigm| u
    \right)
    C_n^\lambda(x)t^n
  \end{displaymath}
is algebraic, for arbitrary\/ $u,$ 
{\rm(1)} if\/ $\lambda\in\mathbb{Z}\pm1/4$ with\/
$\gamma-\lambda\in\mathbb{Z}\pm1/3${\rm;} or {\rm(2)}~if\/
$\lambda\in\mathbb{Z}\pm1/6$ with\/
$\gamma-\lambda\in\mathbb{Z}\pm\{1/3,1/4\}${\rm.}
\end{theorem}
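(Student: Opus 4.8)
The plan is to mirror the proof of Theorem~\ref{thm:algebraics}, replacing the identity~(\ref{eq:rewritea1}) by its $u$-dependent extension~(\ref{eq:rewritea1extended}). The point is that the displayed series coincides, after a change of parameters, with the left-hand side of~(\ref{eq:rewritea1extended}), whose right-hand side is visibly algebraic in $t,x,u$ precisely when the Legendre function $P_\nu^\mu$ appearing there is algebraic in its argument.

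First I would match parameters. Realizing the displayed series as a case of the left-hand side of~(\ref{eq:rewritea1extended}) forces $\mu=1/2-\lambda$ (so that $2\lambda=1-2\mu$ and $C_n^\lambda=C_n^{1/2-\mu}$) together with $\nu-\mu+1=2\lambda-\gamma$, i.e.\ $\nu=\lambda-\gamma-1/2$ and hence $\gamma-\lambda=-\nu-1/2$. Under this dictionary the two left-hand sides are identical as power series in~$t$, so the generating function equals the right-hand side of~(\ref{eq:rewritea1extended}) with these $\nu,\mu$.

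Next I would check that the hypotheses on $(\lambda,\gamma)$ are exactly the Schwarz-algebraic conditions on $(\nu,\mu)$ recorded in the Appendix. In claim~(1), $\lambda\in\mathbb{Z}\pm1/4$ gives $\mu=1/2-\lambda\in\mathbb{Z}\pm1/4$, while $\gamma-\lambda\in\mathbb{Z}\pm1/3$ gives $\nu=-(\gamma-\lambda)-1/2\in\mathbb{Z}\pm1/6$; this is the octahedral case~(iii). In claim~(2), $\lambda\in\mathbb{Z}\pm1/6$ gives $\mu\in\mathbb{Z}\pm1/3$, and then $\gamma-\lambda\in\mathbb{Z}\pm1/4$ gives $\nu\in\mathbb{Z}\pm1/4$ (the tetrahedral case~(iv[a])), whereas $\gamma-\lambda\in\mathbb{Z}\pm1/3$ gives $\nu\in\mathbb{Z}\pm1/6$ (the tetrahedral case~(iv[b])). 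Tracking the signs and integer shifts through $\mu=1/2-\lambda$ and $\nu=-(\gamma-\lambda)-1/2$, so that each combination of a sign and an integer shift lands in the prescribed lattice coset, is the only delicate bookkeeping, and I expect this sign-chasing to be the main, if routine, obstacle.

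Finally I would observe that the right-hand side of~(\ref{eq:rewritea1extended}) is algebraic in $t,x,u$ in each of these cases. Indeed $R^2=1-2xt+t^2$ and $U^2=1-2(1-u)xt+(1-u)^2t^2$ are polynomials in $t,x,u$, and $\nu,\mu$ are rational, so the factors $R^{\nu+\mu}$, $U^{-\nu+\mu-1}$, and $(z^2-1)^{\mu/2}$ are algebraic; the argument $z=[R^2+u(x-t)t]/(UR)$ is a rational expression in $R,U$ and polynomials, hence algebraic; and $P_\nu^\mu(z)$ is algebraic in~$z$ by Schwarz's classification. Since products and compositions of algebraic functions remain algebraic, the entire right-hand side, and therefore the generating function, is algebraic in $t,x,u$ for arbitrary~$u$, establishing both claims. (The Ferrers case $z\in(-1,1)$ is handled identically, with $P_\nu^\mu$ and $z^2-1$ replaced by ${\rm P}_\nu^\mu$ and $1-z^2$.)
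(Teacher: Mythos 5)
Your proposal is correct and takes essentially the same route as the paper, which presents this theorem as an immediate consequence of the $u$-dependent extension~(\ref{eq:rewritea1extended}), mirroring how Theorem~\ref{thm:algebraics} was obtained from~(\ref{eq:rewritea1}). Your parameter dictionary $\mu=1/2-\lambda$, $\nu=\lambda-\gamma-1/2$, the identification of the hypotheses with the octahedral case (iii) and the tetrahedral cases (iv[a]), (iv[b]), and the closure of algebraic functions under products and composition are exactly the details the paper treats as immediate.
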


Explicit expressions for these $u$-dependent generating functions, which
are algebraic in~$u$ as~well as in~$t,x$, can be computed with some effort
from Theorem~\ref{thm:rewriteaextended}, if one exploits the fundamental
formulas (\ref{subeqs:basicocto}),(\ref{subeqs:basictetra}).

\section{The Second Gegenbauer Generating Function}
\label{sec:second}

The following theorem presents Brafman's second ${}_2F_1$-based generating
function.\cite{Brafman51}
\begin{theorem}
\label{thm:bgf2}  
  For any\/ $\lambda\in\mathbb{C}\setminus\{0,-1/2,-1,\dotsc\}$ and\/
  $\gamma\in\mathbb{C},$ one has
\begin{subequations}
\begin{align}
  &\sum_{n=0}^\infty \frac{(\gamma)_n(2\lambda-\gamma)_n}{(2\lambda)_n(\lambda+1/2)_n}\,C_n^\lambda(x)t^n\nonumber\\
  &=
{}_2F_1\!\left({{\gamma,\,2\lambda-\gamma}\atop{\lambda+1/2}}\biggm|\frac{1-R-t}{2}\right)
{}_2F_1\!\left({{\gamma,\,2\lambda-\gamma}\atop{\lambda+1/2}}\biggm|\frac{1-R+t}{2}\right)
\label{eq:b2a}
\\
  &=(1-2xt)^{-\gamma}\nonumber\\
&\qquad{}\times 
{}_2F_1\!\left({{\gamma/2,\,\gamma/2+1/2}\atop{\lambda+1/2}}\biggm|1-\,\frac{1}{(R+t)^2}\right)\nonumber\\
&\qquad{}\times 
{}_2F_1\!\left({{\gamma/2,\,\gamma/2+1/2}\atop{\lambda+1/2}}\biggm|1-\,\frac{1}{(R-t)^2}\right),
\label{eq:b2b}
\end{align}
as equalities between power series in\/ $t$.
\end{subequations}
\end{theorem}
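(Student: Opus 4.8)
The plan is to treat~(\ref{eq:b2b}) as the primary identity, just as~(\ref{eq:b1b}) was the primary form in Theorem~\ref{thm:bgf1}, and to obtain~(\ref{eq:b2a}) from it by the same quadratic transformation applied to each hypergeometric factor.

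First I would record~(\ref{eq:b2b}) as Brafman's second generating function in rewritten form. Its substance is a product (bilinear) identity that Brafman obtained by series rearrangement: after writing each~$C_n^\lambda(x)$ hypergeometrically in a quadratically transformed version of~(\ref{eq:Cdef}), rather than in the form~(\ref{eq:Cdef}) itself, and interchanging the order of summation, the resulting double sum factors as the Cauchy product of the two~${}_2F_1$'s. What must be verified beyond quoting Brafman is the purely algebraic task of simplifying his arguments into the symmetric pair $1-(R+t)^{-2}$ and $1-(R-t)^{-2}$, carrying the common prefactor $(1-2xt)^{-\gamma}$. Since $R=1$ at $t=0$, each argument vanishes there, so both sides are genuine power series in~$t$ and the asserted equality is one of formal power series.

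Next I would pass from~(\ref{eq:b2b}) to~(\ref{eq:b2a}) by the same quadratic transformation that links~(\ref{eq:b1b}) to~(\ref{eq:b1a}). In the relevant normalization it reads
\[
{}_2F_1\!\left({{\gamma/2,\,\gamma/2+1/2}\atop{\lambda+1/2}}\biggm|1-\rho^2\right)
= \rho^{-\gamma}\,
{}_2F_1\!\left({{\gamma,\,2\lambda-\gamma}\atop{\lambda+1/2}}\biggm|\frac{\rho-1}{2\rho}\right),
\]
an identity one can confirm directly from the equality of~(\ref{eq:b1a}) and~(\ref{eq:b1b}) by setting $\rho=R/(1-xt)$. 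Applying it with $\rho=1/(R+t)$ to the first factor of~(\ref{eq:b2b}) sends its argument to $(1-R-t)/2$ and produces the prefactor $(R+t)^{\gamma}$; applying it with $\rho=1/(R-t)$ to the second factor sends its argument to $(1-R+t)/2$ and produces $(R-t)^{\gamma}$. The decisive simplification is the algebraic identity
\[
(R+t)(R-t) = R^2-t^2 = 1-2xt,
\]
so that the three prefactors collapse, $(1-2xt)^{-\gamma}(R+t)^{\gamma}(R-t)^{\gamma}=1$, and what remains is exactly the factor-free product of~${}_2F_1$'s in~(\ref{eq:b2a}).

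The step I expect to be the main obstacle is establishing~(\ref{eq:b2b}) itself; the transformation step is formal. If one does not simply quote Brafman, the series rearrangement requires care: the hypergeometric representation of~$C_n^\lambda$ must be the quadratically transformed one, so that the inner summation variable separates cleanly into the two factors indexed by $R+t$ and $R-t$, and the interchange of summation must be justified at the level of formal power series in~$t$. Tracking how the $\pm t$ asymmetry yields precisely the arguments $1-(R\pm t)^{-2}$ is the delicate bookkeeping; once~(\ref{eq:b2b}) is in hand,~(\ref{eq:b2a}) follows mechanically.
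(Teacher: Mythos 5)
Your overall architecture---quote Brafman for one of the two product forms, then pass to the other by a quadratic transformation of each ${}_2F_1$ factor---is exactly the paper's, and your transformation step is correct: the identity
\[
{}_2F_1\!\left({{\gamma/2,\,\gamma/2+1/2}\atop{\lambda+1/2}}\biggm|1-\rho^2\right)
= \rho^{-\gamma}\,
{}_2F_1\!\left({{\gamma,\,2\lambda-\gamma}\atop{\lambda+1/2}}\biggm|\frac{\rho-1}{2\rho}\right),
\]
applied with $\rho=1/(R\pm t)$ and combined with $(R+t)(R-t)=R^2-t^2=1-2xt$ to collapse the prefactors, does interconvert (\ref{eq:b2a}) and (\ref{eq:b2b}), in either direction.

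The gap is that you have the attribution backwards, and your fallback argument for the form you treat as primary does not hold up. Brafman's second generating function in Ref.~\citenum{Brafman51} is version (\ref{eq:b2a}): it is the Gegenbauer specialization ($\alpha=\beta=\lambda-1/2$) of his Jacobi-polynomial identity, whose factors carry the parameters $\gamma,\,2\lambda-\gamma$ and the arguments $(1-R\mp t)/2$. Version (\ref{eq:b2b}) is a rewriting and cannot simply be quoted. Your analogy with Theorem~\ref{thm:bgf1}, where (\ref{eq:b1b}) is indeed the primary Brafman form, is what misled you; the situation for the second generating function is the reverse. Nor can (\ref{eq:b2b}) be reached from Brafman's statement by a ``purely algebraic task of simplifying his arguments,'' as you claim: (\ref{eq:b2b}) has different hypergeometric parameters ($\gamma/2,\,\gamma/2+1/2$ rather than $\gamma,\,2\lambda-\gamma$), and no rewriting of arguments alone changes parameters---the quadratic transformation itself is what is needed, in the direction opposite to the one in which you deploy it. Finally, your sketched direct proof of (\ref{eq:b2b}) by series rearrangement (a double sum ``factoring as the Cauchy product'' of the two ${}_2F_1$'s) is not viable as stated: the bilinear product structure is the genuinely nontrivial content of Brafman's second generating function, which rests on a Bateman-type expansion for products of hypergeometric functions, not on the rearrangement device that proves the first generating function and Lemma~\ref{lem:key}. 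The repair is immediate: quote (\ref{eq:b2a}) from Brafman, and run your (correct and invertible) quadratic transformation in the reverse direction to obtain (\ref{eq:b2b}).
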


\begin{note}
  Version~(\ref{eq:b2a}) is Brafman's; (\ref{eq:b2b})~follows by
  quadratically transforming each~${}_2F_1$.  When $\gamma=-N$,
  $N=0,1,2,\dotsc$, the series terminate, and the right-hand side
  of~(\ref{eq:b2a}) is proportional to $C_N^\lambda(R+t)C_N^\lambda(R-t)$.
  Irrespective of~$\gamma$, the Legendre (i.e., $\lambda=1/2$) case is of
  special interest.  The $C_n^\lambda$ then reduces to the Legendre
  polynomial~$P_n$, and each ${}_2F_1$ in~(\ref{eq:b2a}) is proportional to
  the Legendre function~$P_{-\gamma}$.
\end{note}

By applying the $p=q=2$ case of Lemma~\ref{lem:key} to the statement of
Theorem~\ref{thm:bgf2}, one readily obtains the following corollaries of
(\ref{eq:b2a}) and~(\ref{eq:b2b}); for the former,
cf.~Brafman.\cite{Brafman57}

\begin{theorem}
\label{thm:bgf2extended}
  For any\/ $\lambda\in\mathbb{C}\setminus\{0,-1/2,-1,\dotsc\}$ and\/
  $\gamma\in\mathbb{C},$ and arbitrary\/~$u,$ one has
\begin{subequations}
\begin{align}
  &\sum_{n=0}^\infty {}_3F_2\!\left({{-n,\,\gamma,\,2\lambda-\gamma}\atop{2\lambda,\,\lambda+1/2}}\biggm|u\right)C_n^\lambda(x)t^n\nonumber\\
  &= R^{-2\lambda}\,
  {}_2F_1\!\left({{\gamma,\,2\lambda-\gamma}\atop{\lambda+1/2}}\biggm|\frac{R-U+ut}{2R}\right)
{}_2F_1\!\left({{\gamma,\,2\lambda-\gamma}\atop{\lambda+1/2}}\biggm|\frac{R-U-ut}{2R}\right)
\label{eq:b2aextended}
\\
  &=(U^2 - u^2t^2)^{-\gamma}R^{2\gamma-2\lambda}\nonumber\\
&\qquad{}\times 
{}_2F_1\!\left({{\gamma/2,\,\gamma/2+1/2}\atop{\lambda+1/2}}\biggm|1-\,\frac{R^2}{(U-ut)^2}\right)\nonumber\\
&\qquad{}\times 
{}_2F_1\!\left({{\gamma/2,\,\gamma/2+1/2}\atop{\lambda+1/2}}\biggm|1-\,\frac{R^2}{(U+ut)^2}\right),
\label{eq:b2bextended}
\end{align}
where the equalities are between power series in\/ $t$.
\end{subequations}
\end{theorem}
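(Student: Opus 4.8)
The plan is to recognize Theorem~\ref{thm:bgf2extended} as nothing but Brafman's second generating function (Theorem~\ref{thm:bgf2}), transported by the change of variables built into Lemma~\ref{lem:key}. First I would specialize that lemma to $p=q=2$, with $(c_1,c_2)=(\gamma,\,2\lambda-\gamma)$ and $(d_1,d_2)=(2\lambda,\,\lambda+1/2)$. The left-hand ${}_{p+1}F_q$ then becomes exactly the ${}_3F_2$ of~(\ref{eq:b2aextended}), so the two left-hand sides coincide, and the lemma rewrites the generating function as $R^{-2\lambda}\sum_{n=0}^\infty \tfrac{(\gamma)_n(2\lambda-\gamma)_n}{(2\lambda)_n(\lambda+1/2)_n}\,C_n^\lambda(y)\,s^n$, where $y\defeq(x-t)/R$ and $s\defeq -ut/R$. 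The point is that this inner sum is precisely the left-hand series of Theorem~\ref{thm:bgf2}, now evaluated at argument $y$ and in the variable $s$; so the whole problem reduces to substituting $x\mapsto y$, $t\mapsto s$ into the two closed forms (\ref{eq:b2a}) and~(\ref{eq:b2b}), simplifying, and multiplying by the prefactor $R^{-2\lambda}$.

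The crux is a single algebraic identity. Writing $\widehat R\defeq(1-2ys+s^2)^{1/2}$ for the $R$-quantity formed from the shifted data, I would compute $1-2ys+s^2 = [\,R^2+2ut(x-t)+u^2t^2\,]/R^2$ and observe that the bracketed numerator is identically $U^2=1-2(1-u)xt+(1-u)^2t^2$. Hence $\widehat R = U/R$, the branches agreeing because both equal $1$ at $t=0$. Everything downstream is bookkeeping that follows once this identity, together with $\widehat R+s=(U-ut)/R$ and $\widehat R-s=(U+ut)/R$, is in hand; I expect the verification $R^2+2ut(x-t)+u^2t^2=U^2$ to be the only step with any real content.

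With those relations available I would finish the two versions in parallel. For~(\ref{eq:b2a}), the two hypergeometric arguments $\tfrac{1}{2}(1-\widehat R\mp s)$ collapse to $(R-U\pm ut)/2R$, and restoring the lemma's factor $R^{-2\lambda}$ yields~(\ref{eq:b2aextended}) verbatim. For~(\ref{eq:b2b}), the prefactor transforms as $(1-2ys)^{-\gamma}=(\widehat R^2-s^2)^{-\gamma}=(U^2-u^2t^2)^{-\gamma}R^{2\gamma}$, while the arguments $1-(\widehat R\pm s)^{-2}$ become $1-R^2/(U\mp ut)^2$; combining the transformed prefactor with $R^{-2\lambda}$ gives $(U^2-u^2t^2)^{-\gamma}R^{2\gamma-2\lambda}$, exactly the prefactor of~(\ref{eq:b2bextended}). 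Equivalently, one could obtain~(\ref{eq:b2bextended}) from~(\ref{eq:b2aextended}) by the same quadratic transformation that links~(\ref{eq:b2b}) to~(\ref{eq:b2a}), since that equality is inherited under substitution.

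The one genuine subtlety, which I would flag rather than wave through, is the legitimacy of the substitution $t\mapsto s=-ut/R$ inside an identity asserted only between power series in~$t$. Since $s=O(t)$ carries no constant term while $y=x+O(t)$, each summand $C_n^\lambda(y)\,s^n$ is a power series in $t$ of order at least $n$, so the composite inner sum is a well-defined formal power series in~$t$; and the closed forms of~(\ref{eq:b2a}) and~(\ref{eq:b2b}), being analytic in their arguments on the relevant domain, may be composed with the analytic functions $y(x,t)$ and $s(x,t)$ and continued, so that the resulting identity of power series in~$t$ holds as claimed. This compositional step is the only place where the power-series reading of the statement is actually used.
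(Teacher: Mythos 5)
Your proposal is correct and takes essentially the same approach as the paper: the paper obtains Theorem~\ref{thm:bgf2extended} precisely by applying the $p=q=2$ case of Lemma~\ref{lem:key} (with $c_1,c_2=\gamma,\,2\lambda-\gamma$ and $d_1,d_2=2\lambda,\,\lambda+1/2$) to Theorem~\ref{thm:bgf2}, exactly as you do. Your key identity $\widehat R = U/R$, the resulting substitutions, and the remark on composing formal power series correctly supply the bookkeeping that the paper compresses into ``one readily obtains.''
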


\begin{note}
  When $\gamma=-N$, $N=0,1,2,\dots$, the series terminate, and the
  right-hand side of~(\ref{eq:b2aextended}) is proportional to
  $C_N^\lambda((U-\nobreak ut)/\nobreak R)\allowbreak
  C_N^\lambda((U+\nobreak ut)/\nobreak R)$.  

  The Legendre case of~(\ref{eq:b2aextended}), i.e., that of
  $\lambda=1/2$ with $\gamma$~unrestricted, was derived before Brafman
  by Rice~\cite{Rice40}, who attributed its $u=1$ sub-case to
  Bateman.\cite{Bateman38}\ \ (Cf.\ Rice's Eqs.\ (2.11) and (2.14),
  and Bateman's~(4.3).)
\end{note}

Theorem~\ref{thm:bgf2extended} does not reduce easily to
Theorem~\ref{thm:bgf2} (for instance, by setting $u=1$, which reduces
Theorem~\ref{thm:bgf1extended} to Theorem~\ref{thm:bgf1}).  It is
better described as a corollary than as an extension.

But the ${}_2F_1$ functions in Theorems \ref{thm:bgf2}
and~\ref{thm:bgf2extended} are familiar: they have the same parameters
as in Theorems \ref{thm:bgf1} and~\ref{thm:bgf1extended}, to which
Theorems \ref{thm:bgf2} and~\ref{thm:bgf2extended} are analogous.
Rewriting Theorem~\ref{thm:bgf2} in~terms of associated Legendre or
Ferrers functions yields the following, which is analogous to
Theorem~\ref{thm:rewritea}.  (For~(\ref{eq:15}), cf.\ Thm.~2 of Cohl
and MacKenzie.\cite{Cohl2013b}.)
\begin{theorem}
For any\/ $\mu\notin\{1/2,1,3/2,\dotsc\},$ one has
\begin{subequations}
\begin{align}
&\sum_{n=0}^\infty
\frac{(-\nu-\mu)_n (1+\nu-\mu)_n}{(1-2\mu)_n(1-\mu)_n}\,C_n^{1/2-\mu}(x)t^n\label{eq:15}
\\
&\qquad\qquad\qquad\qquad\qquad\qquad\quad=2^{-2\mu}\,\Gamma(1-\mu)^2 \,\mathcal{F}_\nu^\mu(R+t)\,\mathcal{F}_\nu^\mu(R-t),\nonumber
\end{align}
for any\/ $\nu\in\mathbb{C}$.
Moreover,
\begin{sizealign}{\small}
&\sum_{n=0}^\infty
\frac{(1/2-2\mu)_n (1/2)_n}{(1-2\mu)_n(1-\mu)_n}\,C_n^{1/2-\mu}(x)t^n
\\
&=2^{-2\mu}\,\Gamma(1-\mu)^2 \,(1-2xt)^{2\mu-1/2} \,\mathcal{F}_{-1/4}^\mu\!\left(\frac2{(R-t)^2}-1\right)\,\mathcal{F}_{-1/4}^\mu\!\left(\frac2{(R+t)^2}-1\right).\nonumber
\end{sizealign}
\end{subequations}
These hold as equalities between power series in\/ $t$.  The function
$\mathcal{F}_\nu^\mu(z)$ is defined as\/
$(z^2-\nobreak1)^{\mu/2}\,P_\nu^\mu(z)$ or\/ $(1-z^2)^{\mu/2}\,{\rm
  P}_\nu^\mu(z)$, which are equivalent.  {\rm(}For real\/~$z$, the Legendre
definition should be used if\/ $z>1$ and the Ferrers definition if\/
$z\in(-1,1)$.{\rm)}
\end{theorem}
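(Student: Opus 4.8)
\emph{The plan} is to treat this as the ``second generating function'' analogue of Theorem~\ref{thm:rewritea}, reusing the same hypergeometric representation~(\ref{eq:Pdef}) of the associated Legendre function that powered that proof. Recall what (\ref{eq:Pdef}) supplies, read in the parametrization $\lambda=1/2-\mu$, $\gamma=-\nu-\mu$ (so that $2\lambda-\gamma=1+\nu-\mu$ and $\lambda+1/2=1-\mu$): the Gauss function ${}_2F_1(-\nu-\mu,\,1+\nu-\mu;\,1-\mu;\,\tfrac{1-z}2)$ equals $2^{-\mu}\Gamma(1-\mu)\,\mathcal{F}_\nu^\mu(z)$, where $\mathcal{F}_\nu^\mu(z)=(z^2-1)^{\mu/2}P_\nu^\mu(z)$. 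Everything else is bookkeeping layered on top of Theorem~\ref{thm:bgf2}.

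First I would establish~(\ref{eq:15}) from version~(\ref{eq:b2a}). Substituting $\lambda=1/2-\mu$ and $\gamma=-\nu-\mu$ turns the left-hand coefficient $(\gamma)_n(2\lambda-\gamma)_n/[(2\lambda)_n(\lambda+1/2)_n]$ into $(-\nu-\mu)_n(1+\nu-\mu)_n/[(1-2\mu)_n(1-\mu)_n]$, as required. On the right, the two Gauss functions carry the numerator parameters $-\nu-\mu,\,1+\nu-\mu$ over the denominator $1-\mu$, and their arguments $\tfrac{1-R\mp t}2$ are precisely $\tfrac{1-z}2$ with $z=R\pm t$. Applying~(\ref{eq:Pdef}) to each factor replaces it by $2^{-\mu}\Gamma(1-\mu)\,\mathcal{F}_\nu^\mu(R\pm t)$, and the product of the two prefactors is $2^{-2\mu}\Gamma(1-\mu)^2$, exactly the constant claimed.

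The second identity comes the same way from version~(\ref{eq:b2b}), under the single specialization $\gamma=2\lambda-1/2=1/2-2\mu$ that was already needed to pass from~(\ref{eq:b1b}) to~(\ref{eq:rewritea2}); this forces $\nu=-1/4$ in~(\ref{eq:Pdef}), since then the numerator parameters $\gamma/2,\,\gamma/2+1/2$ equal $1/4-\mu,\,3/4-\mu$, which are $-\nu-\mu,\,1+\nu-\mu$. The left-hand coefficient collapses to $(1/2-2\mu)_n(1/2)_n/[(1-2\mu)_n(1-\mu)_n]$. Writing each argument $1-\tfrac1{(R\pm t)^2}$ as $\tfrac{1-z}2$ gives $z=\tfrac2{(R\pm t)^2}-1$, so~(\ref{eq:Pdef}) with $\nu=-1/4$ converts the two factors into $2^{-\mu}\Gamma(1-\mu)\,\mathcal{F}_{-1/4}^\mu\big(\tfrac2{(R\pm t)^2}-1\big)$; carrying along the prefactor $(1-2xt)^{-\gamma}=(1-2xt)^{2\mu-1/2}$ then reproduces the stated right-hand side.

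The step needing genuine care---the only place this is more delicate than Theorem~\ref{thm:rewritea}---is the Legendre-versus-Ferrers dichotomy, because now there are two arguments rather than one. For small real $t$ one has $R\pm t\approx 1+(1\mp x)t$, so when $|x|<1$ the arguments $R+t$ and $R-t$ straddle the branch point $z=1$: one exceeds $1$ (calling for the Legendre $P_\nu^\mu$) while the other lies in $(-1,1)$ (calling for the Ferrers ${\rm P}_\nu^\mu$). This is exactly why the unified symbol $\mathcal{F}_\nu^\mu$ is introduced, and the proof must invoke the equivalence $(z^2-1)^{\mu/2}P_\nu^\mu(z)=(1-z^2)^{\mu/2}{\rm P}_\nu^\mu(z)$ to guarantee that $\mathcal{F}_\nu^\mu$ is a single function, analytic across $z=1$. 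Since the defining ${}_2F_1$ is analytic at argument $0$, each $\mathcal{F}_\nu^\mu(R\pm t)$ is analytic in $t$ at $t=0$, so both products are bona fide power series in $t$ and the rewritten identities hold in the asserted sense.
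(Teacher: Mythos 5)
Your proof is correct and takes essentially the same route as the paper, which obtains this theorem precisely by rewriting the two ${}_2F_1$ factors in each of (\ref{eq:b2a}) and (\ref{eq:b2b}) via the definition~(\ref{eq:Pdef}), with $\lambda=1/2-\mu$, $\gamma=-\nu-\mu$ for the first identity and $\gamma=2\lambda-1/2$ (forcing $\nu=-1/4$) for the second. One trivial slip: for small $t>0$ the expansion should read $R\pm t\approx 1\pm(1\mp x)t$, not $1+(1\mp x)t$, but your conclusion that the two arguments straddle $z=1$ when $|x|<1$ --- and hence that the unified function $\mathcal{F}_\nu^\mu$, analytic across $z=1$ because it is a constant multiple of the ${}_2F_1$ in~(\ref{eq:Pdef}), is what makes the statement uniform --- is exactly right.
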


The rewriting of Theorem~\ref{thm:bgf2extended} in terms of associated
Legendre functions proceeds similarly to the rewriting of
Theorem~\ref{thm:bgf2}.  By specializing parameters in the two
rewritten theorems, one can derive a number of interesting identities.
For example, in the reducible case~(i), when $\nu=-\mu+\nobreak N$,
$N=0,1,2,\dotsc,$ one can derive analogues of Theorems
\ref{thm:cfMiller} and~\ref{thm:cfMillerextended}.

The focus here is on the octahedral and tetrahedral algebraic cases.  From
the algebraic formulas in the Appendix, substituted into the rewritten
theorems, one deduces the following from their first halves; again, as in
the last section.

\begin{theorem}
  The Gegenbauer generating functions
  \begin{displaymath}
    \sum_{n=0}^\infty
    \frac{(\gamma)_n(2\lambda-\gamma)_n}{(2\lambda)_n(\lambda+1/2)_n}\,
   C_n^\lambda(x)t^n,
    \qquad
    \sum_{n=0}^\infty 
        {}_3F_2\!\left({{-n,\,\gamma,\,2\lambda-\gamma}\atop{2\lambda,\,\lambda+1/2}}\biggm|u\right)C_n^\lambda(x)t^n,
  \end{displaymath}
are algebraic 
{\rm(1)} if\/ $\lambda\in\mathbb{Z}\pm1/4$ with\/
$\gamma-\lambda\in\mathbb{Z}\pm1/3${\rm;} or {\rm(2)}~if\/
$\lambda\in\mathbb{Z}\pm1/6$ with\/
$\gamma-\lambda\in\mathbb{Z}\pm\{1/3,1/4\}${\rm.}
In the latter generating
function, $u$~is arbitrary: the algebraicity is in\/ $x,t$ and\/~$u$.
\end{theorem}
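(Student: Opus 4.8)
The plan is to mirror the argument used for the first generating function in Section~\ref{sec:first}, since the two ${}_2F_1$ factors appearing in Theorems~\ref{thm:bgf2} and~\ref{thm:bgf2extended} carry exactly the same parameter triple $(\gamma,\,2\lambda-\gamma;\,\lambda+1/2)$ as the single ${}_2F_1$ in~(\ref{eq:b1a}). The key observation, already established in the rewritten form of Theorem~\ref{thm:bgf2} (the theorem whose statement precedes this one), is that each such ${}_2F_1$ can be recast as an associated Legendre or Ferrers function $\mathcal{F}_\nu^\mu$ with $\nu$ and $\mu$ determined by the dictionary $\mu=1/2-\lambda$ and $\nu$ fixed so that $\{-\nu-\mu,\,1+\nu-\mu\}=\{\gamma,\,2\lambda-\gamma\}$. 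Concretely, setting $\mu=1/2-\lambda$ gives $-\nu-\mu=\gamma$, hence $\nu=-\gamma-\mu=-\gamma+\lambda-1/2$, so that $\nu-\mu=\lambda-1/2-\gamma+\lambda-1/2\allowbreak=-\gamma+2\lambda-1$ and thus $\nu+\mu=-\gamma+\lambda-1/2+1/2-\lambda=-\gamma$, consistent with the coefficient structure. The translation condition $\gamma-\lambda\in\mathbb{Z}\pm\{1/3,1/4\}$ is precisely engineered so that the resulting pair $(\nu,\mu)$ lands in one of the Schwarz-algebraic classes.

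First I would carry out this parameter matching explicitly, verifying that under the stated hypotheses the pair $(\nu,\mu)$ differs from $(\pm1/6,\pm1/4)$, $(\pm1/4,\pm1/3)$, or $(\pm1/6,\pm1/3)$ by an element of $\mathbb{Z}^2$. For claim~(1), $\lambda\in\mathbb{Z}\pm1/4$ forces $\mu=1/2-\lambda\in\mathbb{Z}\pm1/4$, and $\gamma-\lambda\in\mathbb{Z}\pm1/3$ gives $\nu+\mu=-\gamma\in\mathbb{Z}\mp1/3-2\lambda+\ldots$; tracking the fractional parts shows $(\nu,\mu)$ lies in the octahedral or tetrahedral lattice coset. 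For claim~(2), $\lambda\in\mathbb{Z}\pm1/6$ yields $\mu\in\mathbb{Z}\pm1/6$, and the two sub-choices $\gamma-\lambda\in\mathbb{Z}\pm1/3$ or $\mathbb{Z}\pm1/4$ place $(\nu,\mu)$ in the tetrahedral class $(\pm1/6,\pm1/3)$ or the octahedral class $(\pm1/6,\pm1/4)$ respectively. By the Schwarz classification cited in the Introduction (and summarized in the Appendix), $P_\nu^\mu(z)$ and ${\rm P}_\nu^\mu(z)$, and therefore $\mathcal{F}_\nu^\mu(z)=(z^2-1)^{\mu/2}P_\nu^\mu(z)$, are then algebraic functions of $z$.

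The second step is to confirm that the arguments of the two Legendre factors are themselves algebraic in the base variables. In the rewritten Theorem~\ref{thm:bgf2} these arguments are $R+t$ and $R-t$; in the rewritten form of Theorem~\ref{thm:bgf2extended} they are $(U\mp ut)/R$, with the prefactor $R^{-2\lambda}$ and the shifted Gegenbauer argument as in~(\ref{eq:b2aextended}). Since $R=(1-2xt+t^2)^{1/2}$ and $U=[1-2(1-u)xt+(1-u)^2t^2]^{1/2}$ are algebraic in $t,x,u$, each argument is algebraic in $t,x$ (and in $u$ for the extended case). Composing an algebraic function of $z$ with an algebraic function of $(t,x,u)$ yields an algebraic function; the product of two such factors, together with the algebraic prefactors $2^{-2\mu}\Gamma(1-\mu)^2$ (a constant) and the powers of $R$, $U$, and $1-2xt$, remains algebraic. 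This establishes algebraicity of both right-hand sides, and hence of the left-hand generating functions, which are the formal power series in $t$ the theorem names.

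The main obstacle is purely bookkeeping rather than conceptual: one must check that the fractional-part arithmetic in the parameter matching genuinely lands $(\nu,\mu)$ in the correct coset for \emph{every} sign choice in $\mathbb{Z}\pm\{1/3,1/4\}$ and $\mathbb{Z}\pm1/6$, and that no degenerate configuration (e.g.\ $\mu$ a nonnegative half-integer, excluded by hypothesis $\mu\notin\{1/2,1,3/2,\dots\}$, or a logarithmic case forcing a non-algebraic branch) sneaks in. A subtlety worth flagging is that for the extended ($u$-dependent) generating function built from the ${}_3F_2$ of~(\ref{eq:b2aextended}), one should confirm that the $u$-dependence enters only through the algebraic quantities $U$ and the rational shift $ut$, so that the Legendre \emph{degree and order} $(\nu,\mu)$ — and thus the algebraicity class — are unchanged from the $u=0$ case; this is what licenses the final sentence that the algebraicity is jointly in $x,t,u$. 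Given the earlier theorems, no genuinely new estimate or identity is required, so the proof reduces to these two verifications plus the composition argument.
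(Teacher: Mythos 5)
Your route is in substance the paper's own: the paper proves this theorem in one sentence, by substituting the algebraic Appendix formulas into the Legendre/Ferrers rewritings of Theorems~\ref{thm:bgf2} and~\ref{thm:bgf2extended}, exactly as was done for the first generating function in Sec.~\ref{sec:first}. Your parameter dictionary is also the correct one: matching the ${}_2F_1$ parameters $(\gamma,\,2\lambda-\gamma;\,\lambda+1/2)$ against (\ref{eq:Pdef}) forces $\mu=1/2-\lambda$ and $\nu=\lambda-\gamma-1/2$, and the final composition argument (algebraic Legendre function, evaluated at arguments $R\pm t$ or $(U\mp ut)/R$ that are algebraic in $t,x,u$, times algebraic prefactors) is sound.

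However, the fractional-part bookkeeping that you yourself flag as ``the main obstacle'' is done incorrectly for claim~(2), and as written that verification fails. From $\mu=1/2-\lambda$ and $\lambda\in\mathbb{Z}\pm1/6$ one gets $\mu\in\mathbb{Z}+1/2\mp1/6=\mathbb{Z}\pm1/3$, \emph{not} $\mu\in\mathbb{Z}\pm1/6$ as you assert. The distinction is fatal if taken literally: none of the three Schwarz classes invoked by the paper --- $(\nu,\mu)\in\mathbb{Z}^2+(\pm1/6,\pm1/4)$, $\mathbb{Z}^2+(\pm1/4,\pm1/3)$, $\mathbb{Z}^2+(\pm1/6,\pm1/3)$ --- has order with fractional part $\pm1/6$, so a pair with $\mu\in\mathbb{Z}\pm1/6$ would land in no algebraic class and the argument would prove nothing. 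The correct assignments are: for $\gamma-\lambda\in\mathbb{Z}\pm1/3$, $\nu=-(\gamma-\lambda)-1/2\in\mathbb{Z}\pm1/6$, so $(\nu,\mu)\in\mathbb{Z}^2+(\pm1/6,\pm1/3)$, the tetrahedral case (iv[b]); for $\gamma-\lambda\in\mathbb{Z}\pm1/4$, $\nu\in\mathbb{Z}\pm1/4$, so $(\nu,\mu)\in\mathbb{Z}^2+(\pm1/4,\pm1/3)$, the tetrahedral case (iv[a]) --- not the octahedral class $(\pm1/6,\pm1/4)$ that you name for this sub-choice. (Likewise, in claim~(1) the landing is specifically octahedral, $\mu\in\mathbb{Z}\pm1/4$ with $\nu\in\mathbb{Z}\pm1/6$; your ``octahedral or tetrahedral'' leaves this unresolved.) With these corrections your proposal becomes a correct, fleshed-out version of the paper's proof.
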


\section{The Poisson Kernel}
\label{sec:poisson}

The Poisson kernel for a set of orthogonal polynomials
$\{p_n(x)\}_{n=0}^\infty$ plays a major role in approximation theory.  It
is a bilinear generating function of the form
\begin{equation}
  K_t(x,y) = \sum_{n=0}^\infty h_n\,p_n(x)p_n(y)\,t^n,
\end{equation}
where the normalization coefficients~$h_n$ would be absent if the
polynomials were orthonormal and not merely orthogonal.  The Poisson
kernel for the Gegenbauer polynomials can be expressed in~terms
of~${}_2F_1$, as can a slightly simpler companion
function.\cite{Weisner55,Gasper83,Koornwinder2015}\ \ Let
$x=\cos\theta$ and $y=\cos\phi$, which are appropriate when
$x,y\in(-1,1)$, and define
\begin{subequations}
\begin{align}
\tilde z &= \frac{-4\,t\sin\theta \sin\phi}{1-2t\cos(\theta-\phi) + t^2},
\\
z & =  \frac{4\,t^2\sin^2\theta \sin^2\phi}{(1-2t\cos\theta\cos\phi + t^2)^2},
\end{align}
\end{subequations}
which are related by $z=[\tilde z/(2-\tilde z)]^2$.  The Poisson kernel for
$\{C_n^\lambda(x)\}_{n=0}^\infty$ is
\begin{align}
&\sum_{n=0}^\infty \frac{\lambda+n}{\lambda}\,\frac{n!}{(2\lambda)_n}\,C_n^\lambda(x)C_n^\lambda(y)\,t^n\label{eq:Pk}\\
&\qquad\qquad=\frac{1-t^2}{\left[1-2t\cos(\theta-\phi) + t^2\right]^{\lambda+1}}\:{}_2F_1\!\left({{\lambda,\,\lambda+1}\atop{2\lambda}} \Bigm|\tilde z\right),\nonumber\\
&\qquad\qquad=\frac{1-t^2}{(1-2t\cos\theta\cos\phi + t^2)^{\lambda+1}}\:{}_2F_1\!\left({{(\lambda+1)/2,\,(\lambda+2)/2}\atop{\lambda+1/2}} \Bigm|z\right)\nonumber\\
\intertext{and its companion is}
&\sum_{n=0}^\infty\frac{n!}{(2\lambda)_n}\,C_n^\lambda(x)C_n^\lambda(y)\,t^n\label{eq:Pkcompanion}\\
&\qquad\qquad=\frac{1}{\left[1-2t\cos(\theta-\phi) + t^2\right]^{\lambda}}\:{}_2F_1\!\left({{\lambda,\,\lambda}\atop{2\lambda}}\Bigm|\tilde z\right).\nonumber\\
&\qquad\qquad=\frac{1}{(1-2t\cos\theta\cos\phi + t^2)^{\lambda}}\:{}_2F_1\!\left({{\lambda/2,\,(\lambda+1)/2}\atop{\lambda+1/2}} \Bigm|z\right)\nonumber
\end{align}
In each of (\ref{eq:Pk}) and~(\ref{eq:Pkcompanion}), the two right-hand
sides are related by a quadratic hypergeometric transformation.
Equation~(\ref{eq:Pk}) can be obtained from Eq.~(\ref{eq:Pkcompanion}) by
applying the operator $\lambda^{-1}t^{-\lambda+1} \frac{{\rm d}}{{\rm d}t}
\circ t^{\lambda}$ to both sides.

When the parameter~$\lambda$ is an integer (e.g., in the Chebyshev case),
the Poisson kernel and its companion are elementary functions.  When
$\lambda=1/2$, so that $\{C_n^\lambda(x)\}_{n=0}^\infty$ are the Legendre
polynomials, Watson\cite{Watson32c} expressed the companion in~terms of the
first complete elliptic integral function, $K=K(m)$.  In principle, this
can be done when $\lambda$~is any half-odd-integer.

It does not seem to have been remarked that when $\lambda$~differs
from an integer by one-fourth or one-sixth, expressions in~terms of
complete elliptic integrals can also be obtained.  This is implied by
the pattern of hypergeometric parameters in~(\ref{eq:Pkcompanion}), as
will be explained.  The focus here is on the cases $\lambda=1/4$
and~$\lambda=1/6$; the general $\lambda\in\mathbb{Z}\pm1/4$ and
$\lambda\in\mathbb{Z}\pm1/6$ cases can be handled by applying the
contiguity relations of~${}_2F_1$.

The Gauss hypergeometric ODE satisfied by the function
${}_2F_1(a,b;c;z)$ has singular points at $z=0,1,\infty$, with
respective characteristic exponents $0,1-\nobreak c$; $0,c-\nobreak
a-\nobreak b$; and~$a,b$.  The respective exponent \emph{differences}
are $1-\nobreak c$, $c-\nobreak a-\nobreak b$, and $b-\nobreak a$, and
differences are significant only up to sign.  It is well known that if
a Gauss ODE has an unordered set of (up-to-sign) exponent differences
$\{\frac12,\delta_1,\delta_2\}$, it admits a quadratic transformation
to one with differences $\{\delta_1,\delta_1,2\delta_2\}$, and the
solutions of the two ODEs will correspond.  (For instance, in each of
(\ref{eq:Pk}) and~(\ref{eq:Pkcompanion}) the first right-hand side
comes from the second in this way.)  One may write
$\{\frac12,\delta_1,\delta_2\}\sim\allowbreak
\{\delta_1,\delta_1,2\delta_2\}$.  There are hypergeometric
transformations of higher order than the quadratic (see Sec.~25 of
Ref.~\citenum{Poole36}, \emph{inter alia}).  In particular, there are
sextic ones that arise as compositions of quadratic and cubic ones,
the action of which is summarized by
$\{\frac12,\frac13,\delta\}\sim\allowbreak
\{\frac13,\frac13,2\delta\}\sim\allowbreak
\{2\delta,2\delta,2\delta\}$.

The exponent differences for the ${}_2F_1$'s in the two right-hand
sides of~(\ref{eq:Pkcompanion}) are respectively
$1-\nobreak2\lambda,0,0$ and $1/2-\nobreak\lambda,0,1/2$.  If
$\lambda=1/6$, the second of these triples (when unordered) is
$\{\frac12,\frac13,0\}\sim\allowbreak \{0,0,0\}$.  If $\lambda=1/4$,
the first is $\{\frac12,0,0\}\sim\allowbreak \{0,0,0\}$.  In other
words, a sextic and a quadratic transformation will respectively
convert the ${}_2F_1$'s in the two right-hand sides
of~(\ref{eq:Pkcompanion}) to solutions of a Gauss ODE with exponent
differences $0,0,0$.  This is the Gauss ODE with parameters $a=b=1/2$,
$c=1$, one of the solutions of which is ${}_2F_1(1/2,1/2;1;z')$.
(Here, $z'$~signifies the new independent variable, which is
determined by the hypergeometric transformation.)  But
${}_2F_1(1/2,1/2;1;z')$ equals $(2/\pi)K(z')$, and the full solution
space of the ODE is spanned by $K(z')$ and~$K'(z')\defeq
K(1-\nobreak z')$.

The expressions resulting from the just-described reduction procedure
are somewhat inelegant, but better ones can be obtained heuristically.
In~fact, one can begin with the Poisson kernel itself, rather than its
companion.  The case $\lambda=1/4$ is illustrative.  When
$\lambda=1/4$, the~${}_2F_1$ in the first right-hand side
of~(\ref{eq:Pk}) is of the form ${}_2F_1(1/4,5/4;1/2;w)$, where
$w=\tilde z$.  An explicit formula for this function can be found in
the database of Roach\cite{Roach96}, which is currently available at
\texttt{www.planetquantum.com}.  It is
\begin{align}
&\frac{\Gamma(1/4)^2}{2\sqrt{\pi}}\,{}_2F_1\left(
{{1/4,\,5/4}\atop{1/2}}\biggm| w
\right) \\
&\qquad=\frac{2\sqrt{w}}{1-w}\, E(\tilde w_+)
-\frac{2\sqrt{w}}{1-w}\, E(\tilde w_-)
+\frac1{1+\sqrt{w}}\, K(\tilde w_+)
+\frac1{1-\sqrt{w}}\, K(\tilde w_-),
\nonumber
\end{align}
where $\tilde w_\pm = (1\pm\sqrt{w})/2$.  The presence of the second
complete elliptic integral, $E=E(m)$, for which the exponent differences
are $1,1,0$ and not $0,0,0$, can be attributed to an application of the
contiguous relations of~${}_2F_1$.

\section{Final Remarks}
\label{sec:final}

It has been shown that if the Gegenbauer parameter~$\lambda$ differs by
one-fourth or one-sixth from an integer, there are several generating
functions for the Gegenbauer polynomials $\{C_n^\lambda\}_{n=0}^\infty$
that are algebraic.  These are special cases of Brafman's generating
functions, including the extended ones with an additional free parameter
(denoted~$u$ here).  Gegenbauer polynomials with $\lambda$ restricted as
stated were shown to be special in another way: the Poisson kernel computed
from them can be expressed with the aid of hypergeometric transformations
in~terms of complete elliptic integrals.

The results on algebraicity are consequences of Schwarz's
classification of the algebraic cases of the Gauss function~${}_2F_1$,
and the explicit examples of algebraic generating functions came from
recently developed closed-form expressions for certain algebraic
${}_2F_1$'s with octahedral and tetrahedral monodromy; i.e.,
octahedral and tetrahedral associated Legendre
functions.~\cite{Maier24} This is because the ${}_2F_1$'s in Brafman's
generating functions admit quadratic transformations, so that in
essence, they are Legendre functions.

Generalizations can be considered.  One matter worthy of investigation
is the relevance of icosahedral~${}_2F_1$'s, which though algebraic
cannot be expressed in~terms of radicals.  Some parametric formulas
for them are known\cite{Vidunas2013}, and may yield manageable
parametrizations of the consequent algebraic generating functions for
Gegenbauer polynomials.

The generalization from Gegenbauer to Jacobi polynomials is also worth
pursuing.  It follows readily from Schwarz's classification that
certain special cases of Brafman's second generating function,
generalized to non-Gegenbauer Jacobi polynomials but still expressed
in~terms of~${}_2F_1$, are algebraic functions of their arguments.
However, the Jacobi-polynomial generalization of his first generating
function is known to involve the Appell function~$F_4$, i.e., a
bivariate hypergeometric function.  A full classification of the
algebraic cases of the first generating function, generalized to
Jacobi polynomials, will require results on the algebraicity of Appell
functions; and the same is true of the Poisson kernel.

Finally, it should be mentioned that Brafman's extension procedure,
leading to identities parametrized by~$u$, is not the only one that
can be applied to Gegenbauer generating functions.  By exploiting the
connection formula for Gegenbauer polynomials, Cohl and
collaborators\cite{Cohl2013a,Cohl2013c} have obtained novel extensions
of the defining relation~(\ref{eq:defining}), as Eq.~(11) of their
Ref.~\citenum{Cohl2013a}, and of Brafman's second
identity~(\ref{eq:b2a}), as Eq.~(26) of their
Ref.~\citenum{Cohl2013c}.  For suitably chosen parameter values, such
extensions will be algebraic.

\renewcommand{\theequation}{A.\arabic{equation}}
\setcounter{equation}{0}

\section*{Appendix. Associated Legendre Functions in Closed Form}

The associated Legendre function $P_\nu^\mu(z)$ of degree
$\nu\in\mathbb{C}$ and order $\mu\in\mathbb{C}$ is defined in~terms of the
Gauss function~${}_2F_1$ by
\begin{equation}
\label{eq:Pdef}
P_\nu^\mu(z) = \frac{2^\mu}{\Gamma(1-\mu)}
(z^2-1)^{-\mu/2}\,{}_2F_1\!\left(
{{-\nu-\mu,\, 1+\nu-\mu}\atop{1-\mu}}
\biggm|\frac{1-z}2\right).
\end{equation}
The Ferrers function ${\rm P}_\nu^\mu$ is defined similarly, with
$1-\nobreak z^2$ replacing $z^2-\nobreak1$.  By convention,
${P}_\nu^\mu(z)$ and ${\rm P}_\nu^\mu(z)$ are defined and analytic on the
complex $z$-plane, with the respective omissions of the cut $(-\infty,1]$
  and the cut-pair $(-\infty,-1]\cup[1,\infty)$.  When $\mu=1,2,\dotsc$,
      (\ref{eq:Pdef})~must be taken in a limiting sense.  In the singular
      case when $\nu=0,1,2,\dotsc$ and $\mu-\nobreak\nu$ is a positive
      integer, ${P}_\nu^\mu$~and~${\rm P}_\nu^\mu$ are identically zero.

On their respective domains, ${P}_\nu^{\pm\mu}$ and ${\rm P}_\nu^{\pm\mu}$
span the two-dimensional solution space of the associated Legendre ODE,
except when $(\nu,\mu)\in\mathbb{Z}^2$.  This space can also be viewed as
the span of $P_\nu^\mu$ and~$Q_\nu^\mu$, resp.\ ${\rm P}_\nu^\mu$ and~${\rm
  Q}_\nu^\mu$, where $Q_\nu^\mu$ and~${\rm Q}_\nu^\mu$ are the associated
Legendre and Ferrers functions of the second kind.  (Again, singular cases
are excepted.)  The function ~${P}_\nu^{\mu}(z)$ is singled~out as an
element $f(z)$ of the solution space with
\begin{equation}
\label{eq:condition}
 f(z)\sim\frac{2^{\mu/2}}{\Gamma(1-\mu)}(z-\nobreak1)^{-\mu/2},
\qquad z\to1 
\end{equation}
as asymptotic behavior.

For all $\nu,\mu\in\mathbb{C}$, it follows from~(\ref{eq:Pdef}) that
$P_{-\nu-1}^\mu = P_{\nu}^\mu$ and ${\rm P}_{-\nu-1}^\mu = {\rm
  P}_{\nu}^\mu$.  Also, the ordered pair $(\nu,\mu)$ can be displaced by
any element of~$\mathbb{Z}^2$, for either $P_\nu^\mu$ or~${\rm P}_\nu^\mu$,
by applying an appropriate differential operator.  (See Sec.~6 of
Ref.~\citenum{Maier24}.)  Such `ladder operators,' which increment and
decrement $\nu$ and/or~$\mu$, come from the contiguity relations
of~${}_2F_1$.

There are several cases when the functions $P_\nu,{\rm P}_\nu^\mu$ are
elementary; or to put it more broadly, when all solutions of the
associated Legendre ODE can be reduced to
quadratures.\cite{Kimura69}\ \ These include the case when the ODE, or
the equivalent ODE satisfied by the ${}_2F_1$ in~(\ref{eq:Pdef}), is
`reducible' (see Sec.~2.2 of Ref.~\citenum{Erdelyi53}); and certain
algebraic cases, when the ODE has a finite projective monodromy group
(see Sec.~2.7.2 of Ref.~\citenum{Erdelyi53} and Chap.~VII of
Ref.~\citenum{Poole36}).  In the algebraic cases, this group as a
subgroup of the M\"obius group may be cyclic, dihedral, octahedral,
tetrahedral, or icosahedral, but the last of these possibilities does
not lead to radical expressions.  The other four algebraic cases
are numbered (ii)[a], (ii)[b], (iii), (iv) here.

The reducible case is numbered (i) here.  It is the case when
$\nu=-\mu+\nobreak N$, $N=0,1,2,\dotsc$, and is also called the
degenerate or Gegenbauer case.  It follows from (\ref{eq:Cdef})
and~(\ref{eq:Pdef}) that when $\mu\neq\frac12,1,\frac32,\dotsc$,
\begin{equation}
\label{eq:Gcase}
P_{-\mu+N}^\mu(z) = \frac{2^\mu}{\Gamma(1-\mu)}\,\frac{N!}{(1-2\mu)_N}
\,
(z^2-1)^{-\mu/2}\,
C_N^{1/2-\mu}(z),
\end{equation}
with the same holding if $P_{-\mu+N}^\mu$ and $z^2\nobreak-1$ are replaced
by ${\rm P}_{-\mu+N}^\mu$ and $1-\nobreak z^2$.

Of the four non-icosahedral algebraic cases, the simplest is (ii[a]):
the cyclic case, when the degree~$\nu$ is an integer.  The basic
formulas are
\begin{subequations}
\label{eq:allisaccomplished}
\begin{align}
  P_0^\mu(\coth\xi) &= \Gamma(1-\mu)^{-1}\,e^{\mu\xi},\\
  {\rm P}_0^\mu(\tanh\xi) &= \Gamma(1-\mu)^{-1}\,e^{\mu\xi}.
\end{align}
\end{subequations}
Actually, $P_0^\mu(z),\allowbreak{\rm P}_0^\mu(z)$ are algebraic in~$z$
only if $\mu$~is rational; for general~$\mu$, the term `quasi-cyclic'
will be used.  For any nonzero $\nu\in\mathbb{Z}$, $P_\nu^\mu,{\rm
  P}_\nu^\mu$ are computed from $P_0^\mu,{\rm P}_0^\mu$ by applying ladder
operators that shift the degree.

There is also (ii[b]): the dihedral case, when the order~$\mu$ is a
half-odd-integer.  The basic formulas are
\begin{subequations}
  \begin{align}
    P_\nu^{1/2}(\cosh\xi) &= \sqrt{\frac2\pi}\, \frac{\cosh\left[(\nu+1/2)\xi\right]}{\sqrt{\sinh\xi}},\\
    {\rm P}_\nu^{1/2}(\cos\theta) &= \sqrt{\frac2\pi}\, \frac{\cos\left[(\nu+1/2)\xi\right]}{\sqrt{\sin\theta}},
  \end{align}
\end{subequations}
which define algebraic functions $P_\nu^{1/2},{\rm P}_\nu^{1/2}$ only if
$\nu$~is rational; for general~$\nu$, the term `quasi-dihedral' will be
used.  For any half-odd-integer~$\mu$ other than~$1/2$, $P_\nu^\mu,{\rm
  P}_\nu^\mu$ are computed from $P_\nu^{1/2},{\rm P}_\nu^{1/2}$ by applying
ladder operators that shift the order.

The algebraic cases recently examined\cite{Maier24} include (iii): the
octahedral case, when $(\nu,\mu)\in\mathbb{Z}^2+(\pm1/6,\pm1/4)$.  For
this, define algebraic functions $h_\pm,k_\pm$ trigonometrically by
\begin{subequations}
  \begin{align}
    h_\pm(\cosh\xi) &= \left\{
    (\sinh\xi)^{-1}
\left[
\pm\cosh(\xi/3) + \sqrt{\frac{\sinh\xi}{3\sinh(\xi/3)}}\,
\right]
\right\}^{1/4},
\\
    k_\pm(\cos\theta) &= \left\{
    (\sin\theta)^{-1}
\left[
\cos(\theta/3) \pm \sqrt{\frac{\sin\theta}{3\sin(\theta/3)}}\,
\right]
\right\}^{1/4}.
  \end{align}
\end{subequations}
Then, the basic formulas are
\begin{subequations}
  \label{subeqs:basicocto}
  \begin{align}
  \label{eq:basicocto1}
    P_{-1/6}^{\pm1/4}(\cosh\xi) &=
    3^{(3/8)(1\mp1)}\,\Gamma(1\mp1/4)^{-1}\,h_\pm(\cosh\xi),
    \\
    \label{eq:basicocto2}
    {\rm P}_{-1/6}^{\pm1/4}(\cos\theta) &=
    3^{(3/8)(1\mp1)}\,\Gamma(1\mp1/4)^{-1}\,k_\pm(\cos\theta).
  \end{align}
\end{subequations}
(The plus formulas were derived in Ref.~\citenum{Maier24} and the minus
formulas follow from them, the normalization factors coming from the
condition~(\ref{eq:condition}).)  Ladder operators can be applied to these
basic formulas, as needed.

The other algebraic case recently examined\cite{Maier24} is (iv[a]):
the first subcase of the tetrahedral case, when
$(\nu,\mu)\in\mathbb{Z}^2+(\pm1/4,\pm1/3)$.  For this, define
algebraic functions $f_\pm,g_\pm$ trigonometrically by
\begin{subequations}
  \begin{align}
    f_\pm(\coth\xi) &= \left\{
    (\sinh\xi)
\left[
\pm\cosh(\xi/3) + \sqrt{\frac{\sinh\xi}{3\sinh(\xi/3)}}\,
\right]
\right\}^{1/4},
\label{eq:lastminutea}
\\
    g_\pm(\tanh\xi) &= \left\{
    (\cosh\xi)
\left[
\pm\sinh(\xi/3) + \sqrt{\frac{\cosh\xi}{3\cosh(\xi/3)}}\,
\right]
\right\}^{1/4}.
\label{eq:lastminuteb}
  \end{align}
\end{subequations}
Then, the basic formulas are
\begin{subequations}
  \label{subeqs:basictetra}
  \begin{multline}
    \label{eq:basictetra1}
    P_{-1/4}^{\pm1/3}(\coth\xi) = 2^{1/2\mp3/4}\,3^{-3/8}\,\Gamma(1\mp1/3)^{-1}\\
    {}\times
    \left[
      \sqrt{\sqrt{3}\pm1}\:f_+ \pm \sqrt{\sqrt{3}\mp1}\:f_-
      \right](\coth\xi),
  \end{multline}
  \begin{multline}
    \label{eq:basictetra2}
    {\rm P}_{-1/4}^{\pm1/3}(\tanh\xi) = 2^{1/2\mp3/4}\,3^{-3/8}\,\Gamma(1\mp1/3)^{-1}\\
    {}\times
    \left[\pm
      \sqrt{\sqrt{3}\pm1}\:g_+ + \sqrt{\sqrt{3}\mp1}\:g_-
      \right](\tanh\xi).
  \end{multline}
\end{subequations}
(It was shown in Ref.~\citenum{Maier24} that $Q_{-1/4}^{-1/3}$ is a
multiple of~$f_-$, and $f_+$~is an independent solution of the same
associated Legendre ODE; so $P_{-1/4}^{\pm1/3}$ must be linear combinations
of~$f_+,f_-$, and the coefficients shown in~(\ref{eq:basictetra1}) can be
deduced with some effort from the condition~(\ref{eq:condition}).)  Ladder
operators can be applied to these basic formulas, as needed.

There remains (iv[b]): the second subcase of the tetrahedral case,
when $(\nu,\mu)\in\allowbreak\mathbb{Z}^2+\nobreak(\pm1/6,\pm1/3)$.
This subcase is related to the first tetrahedral one by a quadratic
hypergeometric transformation, but the resulting formulas are
complicated and are not given here.


\end{document}